\newtheorem{thm}{Theorem}[section]
\newtheorem{cor}[thm]{Corollary}
\newtheorem{lem}[thm]{Lemma}
\newtheorem{rem}[thm]{Remark}
\newcommand{\R}{\mathbb{R}}
\newcommand{\N}{\mathbb{N}}
\newcommand*{\basisvector}[2]{\mathbf e_{#1}^{(#2)}}
\newcommand{\ekr}{\basisvector kr}
\newcommand{\elr}{\basisvector lr}
\newcommand{\midthree}{\basisvector 23}
\newcommand*{\condex}[1]{\mathop{\mathbb E}(#1\mid\mathcal F_n)}
\author{
Haslegrave, John\\
\text{University of Warwick}\\
\text{J.Haslegrave@cantab.net}\\
\and
Jordan, Jonathan\\
\text{University of Sheffield}\\
\text{Jonathan.Jordan@sheffield.ac.uk}\\
\and
Yarrow, Mark\\
\text{University of Sheffield}\\
\text{MAYarrow1@sheffield.ac.uk}\\
}
\title{Condensation in preferential attachment models with location-based choice}
\begin{document}
\maketitle
\begin{abstract}
We introduce a model of a preferential attachment based random graph which extends the family of models in which condensation phenomena can occur. Each vertex has an associated uniform random variable which we call its location. Our model evolves in discrete time by selecting $r$ 
vertices from the graph with replacement, with probabilities proportional to their degrees plus a constant $\alpha$. A new vertex joins the network and attaches to one of these vertices according to a 
given probability associated to the ranking of their locations. We give conditions for the occurrence of condensation, showing the existence of phase transitions in $\alpha$ below which condensation occurs. The condensation in our model differs from that in preferential attachment models with fitness in that the condensation can occur at a random location, that it can be due to a persistent hub, and that there can be more than one point of condensation.

{\bf Keywords}: Preferential Attachment, Fitness, Location, Random Graphs, Phase Transition
\end{abstract}
\section{Introduction}\label{Chapter 3 Section 1}
Preferential attachment graphs were developed as an extension of Erd\H{o}s and R\'enyi's random graph model in order to model evolving networks that exhibited a power law in their degree distributions. The standard preferential attachment method discussed by Barab\'asi and Albert \cite{P.10} evolves from an initial graph $G_0$ with $n_0$ vertices $v_{1-n_0},\ldots,v_0$. For each $n\geq 0$ the graph $G_{n+1}$ is formed by a new vertex $v_{n+1}$ joining $G_n$ and attaching to an existing vertex $V\in\{v_{1-n_0},\dots,v_n\}$ according to
\begin{equation}\label{prefatt}
P(V=v_i)=\frac{\deg_{G_n}(v_i)+\alpha}{\sum_{j=1-n_0}^n(\deg_{G_n}(v_j)+\alpha)},
\end{equation}
for some $\alpha>-1$. Equation \eqref{prefatt} gives the form of preferential attachment developed by Dorogovtsev, Mendes and Samukhin in \cite{P.42} as a generalisation of the Barab\'asi and Albert model found in \cite{P.10}, and we shall use this more general form. However, several of the papers referred to in this section, including \cite{P.10}, only consider the case $\alpha=0$.

It is clear to see from \eqref{prefatt} that vertices with a higher degree have a higher probability of attracting new edges. Some commonly mentioned applications of preferential attachment graphs include the number of links to a website and the growth of the number of connections on social networks.

It is observable in real world networks that the growth in influence of an individual vertex is affected by more factors than just its degree. How attractive the vertex is by itself or in comparison to the others also plays a large part. A model incorporating this notion was introduced by Bianconi and Barab\'{a}si in \cite{P.1}, where they gave each vertex a multiplicative fitness value in their version of \eqref{prefatt}. They did this in order to add an extra dimension to the competition between vertices that joined the vertex using a generalised preferential attachment mechanism. As a consequence, new, fitter, vertices can still compete against vertices which are well-established in the existing network. A particularly interesting feature of preferential attachment with fitness is the so-called condensation phenomena, where at time $n$ a single vertex or a set of vertices of size $o(n)$ (which vertices can depend on time) can have a total degree of order $n$. Condensation for preferential attachment with fitness is studied in detail by Borgs et al.\ \cite{P.2}, Dereich and Ortgiese \cite{P.4} and Dereich, Mailler and M\"{o}rters \cite{P.3}.

Another variant of preferential attachment is the choice model introduced by Malyshkin and Paquette in \cite{P.7,P.8} and Krapivsky and Redner in \cite{P.6}. Here when a new vertex joins the network it first selects several candidate existing vertices at random using \eqref{prefatt}, then attaches to one of the candidates according to a deterministic rule, such as always attaching to the candidate of highest degree. In these papers, depending on the parameters, as the number of vertices increases linear or approximately linear growth can be observed in the degree of the largest vertex. Preferential attachment with degree-based choice was further studied by Haslegrave and Jordan \cite{P.5}, who showed that condensation can also occur when choosing a lower-ranked vertex.

The choice model is combined with fitness in the model studied by Freeman and Jordan \cite{Sou.1} in which each vertex has its own fitness value; the new vertex joins the graph $G_n$ by using preferential attachment to select $r$ vertices from $G_n$. The new vertex forms an edge between itself and the vertex with the highest fitness of the $r$ selections. It is shown in \cite{Sou.1} that again condensation can occur in this model.

In this paper we generalise the model of \cite{Sou.1} to allow for choices other than the largest or smallest, for example selecting the middle vertex of a selection of three. Informally, an example of when the middle of three model might be appropriate is when voting for a political candidate; one might prefer to avoid voting for a candidate is too far left or right, and so decide to cast their vote in the middle. We will also allow for randomised choice rules based on the ranking. Because we are no longer selecting the largest value, we will use the term location rather than fitness. In this paper the locations will be uniform random variables on $[0,1]$; note that as we are only using the order of locations, this is equivalent to any measure without atoms.

Using stochastic approximation techniques, we will show that the normalised empirical measure on the location space given by weighting the location of each vertex of the graph by its degree plus $\alpha$ converges almost surely to a limit. In some cases, this limit is random and has an atom, the emergence of the atom corresponding to condensation occurring in the system. We will show that the atom appears at a random location, as opposed to the results of \cite{P.2, Sou.1} where condensation can only occur at the supremum of the fitness distribution. In addition we will show that condensation in our model can be due to a single vertex which acts as a persistent hub, which is not possible in fitness models. In fact, for some choices of our parameters, condensation has probability 1, but persistent hub behaviour has probability strictly between 0 and 1, so condensation can occur in at least two different ways, each with positive probability.

We will also show that in some cases condensation can occur simultaneously at more than one point; again this is quite different behaviour from that seen in \cite{P.2, Sou.1}. 
Indeed, there are situations where multiple condensation occurs almost surely, and others in which it occurs with probability strictly between $0$ and $1$. 
When using a deterministic choice rule, however, multiple condensation cannot occur and there is always a single phase transition between condensation occurring with probability $0$ and $1$; we give an exact expression for this phase transition for every deterministic rule.

The remainder of this article will start with a discussion of our model and a summary of the main results in Section \ref{Section 2}. Our proofs are in Section \ref{Section 3}, and finally Section \ref{Section 4} includes some specific examples that highlight some of the important aspects of our main results.
\section{Our model and results}\label{Section 2}

\subsection{The model}

Fix a parameter $r\in \N$ with $r\geq 2$, a vector $\Xi\in \R^r$ such that $\Xi_i\in[0,1]$ and $\sum^{r}_{i=1}\Xi_i=1$, and a real number $\alpha>-1$.

We start with a tree $G_{0}$ containing $n_0\geq 2$ vertices which we will denote by $V(G_0)=\bigl\{v_{0},v_{-1},\dots, v_{-(n_0-1)}\bigr\}$. Every vertex $v_i$ in $G_{0}$ has its own location $x_{i}$ in $(0,1)$; we will assume that these locations are distinct. Given $G_n$, at time $n+1$ we form $G_{n+1}$ by adding a new vertex $v_{n+1}$ to the network with a single edge. This vertex has its own uniform random location $x_{n+1}\sim\mbox{Uni}[0,1]$, which is independent of the other $x_i$, and chooses where to attach to at time $n+1$ by selecting a sample of $r$ pre-existing vertices in $G_n$ with replacement with probabilities proportional to their degrees plus $\alpha$ as given by equation \eqref{prefatt}.

Let the $r$ selected vertices at time $n+1$ be denoted by $V_1^{(n+1)},\dots,V_r^{(n+1)}$ with locations $x_{1}^{(n+1)},\allowbreak\dots,x_{r}^{(n+1)}$ respectively, and renumber if necessary so that the locations satisfy $x_{1}^{(n+1)}\leq x_{2}^{(n+1)}\leq\dots\leq x_{r}^{(n+1)}$. For definiteness we specify that if two or more vertices in the selection have the same location, we rank them in the order they were selected; note, however, that with probability $1$ the only way for this to occur is if the same vertex is selected more than once. The probability that $v_{n+1}$ attaches to vertex $V_i^{(n+1)}$ is then given by $\Xi_i$; this choice is independent of the vertices selected and of all choices made at previous time steps. However, this choice need not be independent of $x_i$; while independence between these two variables may seem a natural assumption it is not necessary, and relaxing this assumption may better fit some motivations.

For example if $r=3$ and $\Xi=(0,1,0)$ then the new vertex selects a sample of size $3$, and connects to the selected vertex of median rank. The model can be thought of as generalising the case of the model of \cite{Sou.1} with fixed sample size; that model is obtained by taking our model with $\Xi_r=1$ (or equivalently with $\Xi_1=1$).

Throughout the paper, we shall assume that $G_0$ is a tree. While this requirement is not necessary, and does not change the results, trees are the most natural starting graphs since the attachment process preserves the tree structure. With this assumption, we also have $\sum_{j=1-n_0}^n(\deg_{G_n}(v_j)+\alpha)=2(n+n_0-1)+\alpha(n+n_0)=(n+n_0-1)(2+\alpha)+\alpha$.

\subsection{Results}\label{ss:res}

We define $\Psi_n(x)$ to be the probability that a vertex selected randomly from $G_n$ according to the law \eqref{prefatt} has location less than or equal to $x$, that is \[\Psi_n(x)=\frac{1}{(n+n_0-1)(2+\alpha)+\alpha}\sum_{v_i\in V(G_n):x_i\leq x}(\deg_{G_n}(v_i)+\alpha).\] Clearly $\Psi_n(0)=0$, almost surely, and $\Psi_n(1)=1$. We can think of $\Psi_n$ as being the distribution function of the normalised empirical measure on the location space given by weighting the location of each vertex of the graph by its degree plus $\alpha$; we will label this measure $\nu_n$.

Our first result is on the convergence of the measures $\nu_n$. For a fixed $x$ and choice of $\Xi$, define \[F_1(y;x,\Xi)=x(\alpha+1)-(2+\alpha)y+\sum^{r}_{l=1}\Xi_l \sum^{r}_{i=l}\binom{r}{i}y^{i}(1-y)^{r-i},\] to be considered as a function of $y$ for $y\in[0,1]$.

We will say that $p\in (0,1)$ is a \emph{stable zero} of $F_1(y;x,\Xi)$ if $F_1(p;x,\Xi)=0$ where there exists an $\epsilon$ such that for $y\in (p-\epsilon,p)$ we have $F_1(y;x,\Xi)>0$ and for $y\in (p,p+\epsilon)$ we have $F_1(y;x,\Xi)<0$. Similarly $p\in (0,1)$ is an \emph{unstable zero} if $F_1(p;x,\Xi)=0$ and there exists $\epsilon$ such that for $y\in (p-\epsilon,p)$ we have $F_1(y;x,\Xi)<0$ and for $y\in (p,p+\epsilon)$ we have $F_1(y;x,\Xi)>0$, and $p\in (0,1)$ is a \emph{touchpoint} if $F_1(p;x,\Xi)=0$ and there exists $\epsilon$ such that we have either $F_1(y;x,\Xi)<0$ for all $y\in (p-\epsilon,p+\epsilon)\setminus\{p\}$ or $F_1(y;x,\Xi)>0$ for all $y\in (p-\epsilon,p+\epsilon)\setminus\{p\}$.

\begin{rem}Since $F_1(y;x,\Xi)$ is a polynomial, every root in $(0,1)$ is either a stable zero, an unstable zero or a touchpoint. Also, for $x\in(0,1)$ we have $F_1(0;x,\Xi)>0>F_1(1;x,\Xi)$, so $0$ and $1$ are not roots.\end{rem}

\begin{thm}\label{main1}
As $n\to\infty$, the sequence of measures converges weakly, almost surely, to a (possibly random) probability measure on $[0,1]$, whose distribution function we will call $\Psi$. Furthermore, the following properties hold for any given $x\in (0,1)$. \begin{enumerate}
\item Almost surely, $\Psi(x)$ is a zero of the function $F_1(y;x,\Xi)$.
\item For any stable zero or touchpoint $y$ of $F_1(y;x,\Xi)$, there is positive probability that $\Psi(x)=y$.
\item Any unstable zero $y$ of $F_1(y;x,\Xi)$ has probability zero that $\Psi(x)=y$.
\end{enumerate}
\end{thm}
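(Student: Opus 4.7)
The plan is to fix $x\in(0,1)$, reduce the behaviour of $\Psi_n(x)$ to a one-dimensional stochastic approximation scheme with drift $F_1(\cdot;x,\Xi)$, apply standard SA theory to get parts (1)--(3), and finally upgrade pointwise almost sure convergence on a countable dense set to weak convergence of the measures $\nu_n$. To derive the recursion, write $D_n=(n+n_0-1)(2+\alpha)+\alpha$ and $N_n=D_n\Psi_n(x)$, so $N_n$ is the unnormalised numerator of $\Psi_n(x)$. Going from $G_n$ to $G_{n+1}$, $N_n$ increases by $(1+\alpha)\mathbf{1}\{x_{n+1}\le x\}$ from the new vertex (of initial weight $1+\alpha$), plus $\mathbf{1}\{x^{(n+1)}_J\le x\}$ from the degree increment of the chosen neighbour, where $J$ is the rank to which the edge is attached. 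Conditionally on $\mathcal F_n$, the first contributes $x(1+\alpha)$ in expectation, while the number of the $r$ sampled vertices with location $\le x$ is $\mathrm{Binomial}(r,\Psi_n(x))$, so the second contributes $\sum_{l=1}^r\Xi_l\sum_{i=l}^r\binom{r}{i}\Psi_n(x)^i(1-\Psi_n(x))^{r-i}$. Using $D_{n+1}=D_n+(2+\alpha)$ and dividing through yields
\[
\Psi_{n+1}(x)=\Psi_n(x)+\frac{1}{D_{n+1}}F_1(\Psi_n(x);x,\Xi)+\zeta_{n+1},
\]
where $\zeta_{n+1}$ is $\mathcal F_{n+1}$-measurable, satisfies $E[\zeta_{n+1}\mid\mathcal F_n]=0$, and $|\zeta_{n+1}|=O(1/n)$.

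Since $\sum 1/D_n=\infty$, $\sum 1/D_n^2<\infty$ and $\zeta_n$ is uniformly $O(1/n)$, standard one-dimensional Robbins--Monro / Kushner--Clark theory gives that $\Psi_n(x)$ converges almost surely to some point of the zero set of $F_1(\cdot;x,\Xi)$, which is (1). For (2), attainability of a stable zero $p$ follows from the usual local contractivity argument: in a two-sided neighbourhood on which $F_1$ points towards $p$, $(\Psi_n(x)-p)^2$ is an approximate supermartingale up to an $O(1/n^2)$ error, so with positive probability $\Psi_n(x)$ enters such a neighbourhood and stays trapped there. A touchpoint $p$ is handled the same way on the one-sided neighbourhood on which the sign of $F_1$ drives $\Psi_n(x)$ toward $p$, once one checks that the $O(1/n)$ noise cannot carry $\Psi_n(x)$ across $p$ from the favourable side. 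For (3), the conditional variance of $\zeta_{n+1}$ is bounded below by a positive multiple of $1/D_n^2$ whenever $\Psi_n(x)$ lies in a small neighbourhood of an unstable zero (the sampling randomness and the location $x_{n+1}$ both contribute genuine noise there), and Pemantle's theorem on non-convergence of stochastic approximations to unstable equilibria then gives $P(\Psi_n(x)\to p)=0$.

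To upgrade to weak convergence, apply the above pointwise on a countable dense set $Q\subset(0,1)$, on a common full-probability event. Since each $\Psi_n$ is a distribution function on the compact interval $[0,1]$ with $\Psi_n(0)=0$ and $\Psi_n(1)=1$, the measures $\nu_n$ are automatically tight, and the pointwise limits $\widetilde\Psi(q)$, $q\in Q$, are non-decreasing with values in $[0,1]$. Taking the right-continuous extension $\Psi$ and running the usual Helly-type argument shows that $\Psi$ is almost surely a random probability distribution function and that $\Psi_n(x)\to\Psi(x)$ at every continuity point of $\Psi$, which is exactly weak convergence of $\nu_n$. The hard step, I expect, is the non-convergence part of (3): one must verify Pemantle's noise non-degeneracy condition uniformly in a neighbourhood of each unstable zero, along with the touchpoint case in (2), where $F_1$ degenerates to second order while the noise remains of order $1/\sqrt n$; obtaining the explicit lower bound on the conditional variance of $\zeta_{n+1}$ that prevents the noise from vanishing precisely where $F_1$ does is the technical heart of the argument.
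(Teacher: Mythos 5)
Your proposal follows essentially the same route as the paper: the same one-dimensional stochastic approximation with drift $F_1(\cdot;x,\Xi)$ obtained by tracking the weighted mass below $x$, the same appeal to standard results (the paper uses Pemantle's survey, Corollary 2.7 and Theorems 2.8--2.9, plus Antunovi\'c--Mossel--R\'acz for touchpoints) for parts (1)--(3), and the same countable-dense-set/Helly upgrade to weak convergence. The only place your sketch is thinner than the argument really requires is the touchpoint case of (2), where a one-sided trapping argument does not suffice because the drift repels from the unfavourable side; the paper handles this by citing Theorem 2.5 of Antunovi\'c, Mossel and R\'acz, whose hypotheses are exactly the noise bounds you verify for (3).
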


Depending on the parameters of the model, the limit $\Psi$ may be continuous or discontinuous; for example in Section \ref{midofthreemodd} we will show that the model mentioned above where $r=3$ and $\Xi=(0,1,0)$ exhibits a phase transition where $\Psi$ is almost surely continuous for $\alpha\geq -\frac{1}{2}$ and almost surely has a discontinuity for $\alpha<-\frac{1}{2}$.

Discontinuity of $\Psi$ implies that $\Psi_n$ increases by $\Theta(1)$ on an interval
of length $o(1)$ as $n\to\infty$; this corresponds to a condensation phenomenon whereby a small number of vertices with locations in a range of size $o(1)$ have a
$\Theta(1)$ probability of being selected. A consequence of Theorem \ref{main1} is that where there is an interval of $x$ values for which $F_1(y;x,\Xi)$ has more than one stable root, the discontinuity occurs at a random location, as any stable root has positive probability of being the limit for each $x$ in the interval.

It does not immediately follow from discontinuity of $\Psi$ that a single vertex has linear degree; however, the next result shows that this occurs with positive probability. Without loss of generality, we will focus on the vertex $v_0$, present in the graph from the start, and label its location as $z$. We define
\begin{equation}
D_n=\frac{\alpha+\deg_{G_n}(v_0)}{(n+n_0-1)(2+\alpha)+\alpha},\label{def-dn}
\end{equation}
which would be the probability of selecting $v_0$ for attachment under the preferential attachment rule.

\begin{thm}\label{main2}Let $z$ be the location of vertex $v_0$.
If $y_i\geq y_j$ are two stable fixed points of $F_1(y;z,\Xi)$, then there is positive probability that $(\Psi_n(z),D_n)\to (y_i,y_i-y_j)$ as $n\to\infty$.
\end{thm}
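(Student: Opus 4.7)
The plan is to reduce Theorem \ref{main2} to a two-dimensional stochastic approximation whose two coordinates both satisfy the one-dimensional equation from Theorem \ref{main1}. Let $\tilde{\Psi}_n := \Psi_n(z) - D_n$, which, because almost surely no vertex other than $v_0$ has location exactly $z$, equals the normalised preferential-attachment mass on vertices with location strictly less than $z$. Writing $H(y):=\sum_{l=1}^r \Xi_l \sum_{i=l}^r \binom{r}{i}y^i(1-y)^{r-i}$, the probability that the rank-$l$ sample has location $\leq z$ is $\sum_{i=l}^r \binom{r}{i}\Psi_n(z)^i(1-\Psi_n(z))^{r-i}$, and the analogous probability for location $<z$ uses $\tilde{\Psi}_n$ in place of $\Psi_n(z)$. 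Subtracting yields the clean identity $P(v_{n+1}\text{ attaches to }v_0\mid\mathcal{F}_n)=H(\Psi_n(z))-H(\tilde{\Psi}_n)$. Combining this with the usual increment decomposition (the new vertex itself contributes $(1+\alpha)\mathbf{1}_{x_{n+1}\leq z}$, which almost surely agrees with $(1+\alpha)\mathbf{1}_{x_{n+1}<z}$), I would verify
\[
E\bigl[\tilde{\Psi}_{n+1}-\tilde{\Psi}_n \mid \mathcal{F}_n\bigr] \;=\; \frac{F_1(\tilde{\Psi}_n;z,\Xi)}{(n+n_0)(2+\alpha)+\alpha},
\]
so that $\tilde{\Psi}_n$ satisfies exactly the same stochastic approximation equation as $\Psi_n(z)$.

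Next I would treat $(\Psi_n(z),\tilde{\Psi}_n)$ as a two-dimensional stochastic approximation. Its limiting ODE is the decoupled system $\dot{y}=F_1(y;z,\Xi)$, $\dot{\tilde{y}}=F_1(\tilde{y};z,\Xi)$; the Jacobian at $(y_i,y_j)$ is diagonal with strictly negative eigenvalues $F_1'(y_i),F_1'(y_j)$, so $(y_i,y_j)$ is a linearly stable equilibrium of the joint flow. Applying Theorem \ref{main1} coordinatewise already gives $(\Psi_n(z),\tilde{\Psi}_n)\to(Y_\infty,\tilde{Y}_\infty)$ almost surely with both limits zeros of $F_1(\cdot;z,\Xi)$, and hence $D_n\to Y_\infty-\tilde{Y}_\infty$.

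The main obstacle is showing that the joint limit is $(y_i,y_j)$ with positive probability. I would adapt the positive-probability part of the proof of Theorem \ref{main1} to two dimensions: the noise satisfies $\|\xi_{n+1}\|=O(1/N_n)$ with $\sum_n N_n^{-2}<\infty$, so combined with Lyapunov stability of $(y_i,y_j)$, once $(\Psi_T(z),\tilde{\Psi}_T)$ enters a sufficiently small neighbourhood of $(y_i,y_j)$ at some time $T$ it stays trapped there forever with positive probability. The delicate point is that such an entry event itself has positive probability, which requires the joint noise to be non-degenerate in the direction transverse to the diagonal $\tilde{y}=y$; this holds because $D_{n+1}-D_n$ is driven by the Bernoulli indicator $\mathbf{1}_{\{v_0\text{ selected}\}}$, whose success probability at the equilibrium equals $H(y_i)-H(y_j)=(2+\alpha)(y_i-y_j)\in(0,1)$ whenever $y_i>y_j$ (the identity coming from subtracting the relations $F_1(y_i;z,\Xi)=F_1(y_j;z,\Xi)=0$). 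The degenerate case $y_i=y_j$ follows immediately from Theorem \ref{main1} applied to $\Psi_n(z)$ combined with $D_n=\Psi_n(z)-\tilde{\Psi}_n\to 0$.
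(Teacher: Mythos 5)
Your proposal is correct, and while it shares the paper's overall skeleton---recast the problem as a two-dimensional stochastic approximation, identify the stationary points as pairs of roots of $F_1(\,\cdot\,;z,\Xi)$, check linear stability via the two eigenvalues $F_1'(y_i)$ and $F_1'(y_j)$, and invoke a Pemantle-type positive-probability convergence result---it executes this via a genuinely different and cleaner decomposition. The paper works in the coordinates $(\Psi_n(z),D_n)$, whose drift is the coupled pair $(F_1,F_2)$ with $F_2$ defined through the combinatorial sum \eqref{equ of select vert v1}; it then needs the identity $F_1(y-d;z,\Xi)=F_1(y;z,\Xi)-F_2(y,d;\Xi)$ (Theorem \ref{relationship theorem}), proved by a somewhat laborious induction on $k$, followed by an explicit computation of the upper-triangular Jacobian to extract $\lambda_2(y,d)=\lambda_1(y-d)$. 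Your change of variables to $(\Psi_n(z),\tilde\Psi_n)$ with $\tilde\Psi_n=\Psi_n(z)-D_n$ makes all of this transparent: since $\tilde\Psi_n$ is almost surely the weighted mass of vertices with location strictly below $z$, the identity $P(v_{n+1}\text{ attaches to }v_0\mid\mathcal F_n)=H(\Psi_n(z))-H(\tilde\Psi_n)$ is immediate from inclusion of events, it is precisely the content of Theorem \ref{relationship theorem}, and the resulting system is decoupled, so the diagonal Jacobian with entries $F_1'(y_i),F_1'(y_j)$ requires no computation. What the paper's version buys is that the physically meaningful quantity $D_n$ is carried explicitly through the argument; what yours buys is brevity and the elimination of the induction. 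Two small remarks: first, your attention to attainability---checking that the noise is non-degenerate transverse to the diagonal because the attachment indicator for $v_0$ has success probability $(2+\alpha)(y_i-y_j)$ bounded away from $0$ and $1$ near the equilibrium when $y_i>y_j$---addresses a hypothesis of the two-dimensional convergence theorem that the paper passes over silently, so you are if anything more careful there. Second, like the paper, your linearization step tacitly upgrades ``stable zero'' (a sign change of $F_1$) to ``linearly stable'' ($F_1'<0$); for a degenerate root of odd multiplicity these differ, but this caveat is common to both arguments and not a defect specific to yours.
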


Theorem \ref{main2} shows that if there are two distinct stable fixed points of $F_1(y;z,\Xi)$ then condensation can occur at a persistent hub in the sense that a vertex of the initial graph with location $z$ has its degree growing linearly with $n$ with positive probability. The condensation phenomenon that occurs in this case is thus different from that found for preferential attachment with multiplicative fitness, where Dereich, Mailler and M\"{o}rters \cite{P.3} show that the maximum degree divided by $n$ converges to zero in probability; it is also distinct from that found by Freeman and Jordan \cite{Sou.1} where, although the maximum degree is usually of linear order in $n$, any individual vertex only dominates for a finite time before being displaced by fitter vertices.

However, the next result shows that for some choices of our parameters there is also positive probability that the condensation phenomenon is not due to a persistent hub, as it implies there is positive probability of the condensation occurring at a specific location, where the probability of there being a vertex is zero. This suggests that in this case the condensation phenomenon is more like one of those found in \cite{P.3} or \cite{Sou.1}, in that vertices whose location is close to the condensation location are replaced over time in the condensate by those which are even closer.

\begin{thm}\label{main3}
Let $x\in(0,1)$ and $\Xi$ be such that there exists $p\in (0,1)$ which is a touchpoint of $F_1(y;x,\Xi)$. Then there is positive probability that condensation occurs at $p$ in the sense that $\Psi$ has a discontinuity at $p$.
\end{thm}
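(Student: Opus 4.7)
The plan is to combine Theorem~\ref{main1} with a continuity-of-roots argument on the polynomial $F_1$. Since $p$ is a touchpoint of $F_1(\cdot; x, \Xi)$, Theorem~\ref{main1}(2) gives that the event $A = \{\Psi(x) = p\}$ has positive probability, so it suffices to show that on $A$ the non-decreasing limit $\Psi$ is forced to have a jump. The key structural input is the identity $F_1(y; x', \Xi) - F_1(y; x, \Xi) = (x'-x)(\alpha+1)$: varying $x'$ shifts the graph of $F_1(\cdot; x', \Xi)$ vertically by a constant that does not depend on $y$, so a touchpoint at $p$ deforms into a stable--unstable pair of nearby zeros on exactly one side of $x$ and disappears entirely on the other. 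On the side where the roots disappear, $F_1(\cdot; x', \Xi)$ has no zero at all in a fixed small neighbourhood of $p$.

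Concretely, assume without loss of generality that $p$ is a touchpoint ``from below'', so $F_1(y; x, \Xi) < 0$ on $(p-\delta, p+\delta) \setminus \{p\}$ for some $\delta > 0$, with $p$ as the unique zero in this interval. For $x' < x$ with $(x - x')(\alpha+1)$ sufficiently small, $F_1(\cdot; x', \Xi)$ is strictly negative on $[p-\delta, p+\delta]$ and hence has no root there. I would then take a countable dense sequence $x_k \nearrow x$ within this range; applying Theorem~\ref{main1}(1) to each $x_k$ and intersecting the countably many almost-sure events, almost surely $\Psi(x_k)$ is a zero of $F_1(\cdot; x_k, \Xi)$ for every $k$, so $\Psi(x_k) \notin [p-\delta, p+\delta]$. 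Monotonicity of $\Psi$ together with $\Psi(x) = p$ then forces $\Psi(x_k) \leq p - \delta$, so letting $k \to \infty$ gives $\Psi(x^-) \leq p - \delta < p = \Psi(x)$ on $A$. Thus $\Psi$ has a jump of size at least $\delta$, which is the claimed condensation.

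I expect the main obstacle to be the symmetric case in which $p$ is a touchpoint ``from above'' (so $F_1(\cdot; x, \Xi) > 0$ on a punctured neighbourhood of $p$): here the roots disappear for $x' > x$, and the mirror argument using $x_k \searrow x$ together with $\Psi(x_k) \geq p + \delta$ produces a jump on the right side of $x$. A slightly subtler point is the interaction with right-continuity of the limit CDF: one must be careful whether the $\Psi(x)$ furnished by Theorem~\ref{main1} refers to the CDF value $\nu([0,x])$ or to the pointwise limit $\lim_n \Psi_n(x)$, since these can disagree by exactly the size of the jump. Fortunately the argument above only invokes $\Psi$ at countably many points, so the per-$x$ null sets in Theorem~\ref{main1}(1) are handled by a single almost-sure intersection along the fixed sequence $\{x_k\}$, and the conclusion is robust to either interpretation.
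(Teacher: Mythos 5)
Your argument is correct and is essentially the paper's own proof: both exploit the fact that $F_1(y;x',\Xi)$ is a vertical translate of $F_1(y;x,\Xi)$ by $(x'-x)(\alpha+1)$, so for $x'$ on the appropriate side of $x$ a whole neighbourhood of $p$ is zero-free, which combined with the positive probability that $\Psi(x)=p$ (Theorem \ref{main1} applied to the touchpoint) forces a jump. Your write-up is somewhat more careful than the paper's one-paragraph version (the countable sequence $x_k\nearrow x$, the almost-sure intersection, and the cadlag caveat), but the underlying idea is identical.
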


One natural question is whether it is possible to have more than one discontinuity in $\Psi$, implying more than one point of condensation. 
The following result shows that this is not possible in the case where the same rank is always chosen.

\begin{thm}\label{highlander}
Whenever $\Xi_k=1$ for some $k\in\{1,2,\dots,r\}$, it is impossible to have more than one point of condensation.
\end{thm}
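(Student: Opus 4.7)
The first step is to rewrite the problem in terms of a single polynomial. Define
\[
X(y) := \frac{(\alpha+2)y - \sum_{i=k}^r \binom{r}{i} y^i(1-y)^{r-i}}{\alpha+1},
\]
so that $F_1(y;x,\ekr) = (\alpha+1)\bigl(x - X(y)\bigr)$. The zeros of $F_1(\cdot;x,\ekr)$ are then precisely $\{y : X(y) = x\}$, and the trichotomy of zeros translates into the local behaviour of $X$: a zero $y_0\in(0,1)$ is stable, unstable, or a touchpoint according as $X$ is locally strictly increasing, strictly decreasing, or locally extremal at $y_0$. Note that $X(0)=0$ and $X(1)=1$.

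The second step bounds the critical points of $X$. The standard identity
\[
\frac{d}{dy}\sum_{i=k}^r \binom{r}{i} y^i(1-y)^{r-i} = r\binom{r-1}{k-1}\,y^{k-1}(1-y)^{r-k}
\]
gives $X'(y) = \bigl[(\alpha+2) - r\binom{r-1}{k-1}\,y^{k-1}(1-y)^{r-k}\bigr]/(\alpha+1)$. On $[0,1]$ the function $y\mapsto y^{k-1}(1-y)^{r-k}$ is unimodal when $1<k<r$ and strictly monotone in the edge cases $k\in\{1,r\}$, so $X'$ has at most two sign changes on $(0,1)$. Hence $X$ has at most two maximal intervals of strict increase, which I denote $J_1,J_2$ in order. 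Checking $X'(0)$ and $X'(1)$ shows in addition that $0$ belongs to the leftmost of these intervals whenever $k\geq 2$, and $1$ belongs to the rightmost whenever $k\leq r-1$.

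The third step transfers Theorem~\ref{main1} from a fixed $x$ to a statement about $\Psi$ itself. Applying the pointwise almost-sure assertion on a countable dense set of $x$, and using monotonicity of $\Psi$ together with continuity of $F_1$ in $(y,x)$, one sees that almost surely every value and every one-sided limit of $\Psi$ lies in $\overline{J_1}\cup\overline{J_2}$. Since the $y$-values in $J_1$ lie strictly below those in $J_2$ and $\Psi$ is non-decreasing, $\Psi^{-1}(\overline{J_1})$ is an initial subinterval of $[0,1]$ and $\Psi^{-1}(\overline{J_2})$ is a terminal one, on each of which $\Psi$ is the continuous inverse of $X$ restricted to the corresponding branch.

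Consequently, discontinuities of $\Psi$ can arise only at the boundary between these two subintervals, or at one of $x=0$ and $x=1$. A short case analysis, using $\Psi(0^-)=0$, $\Psi(1)=1$, and the endpoint facts from the second step, shows that in each of the four shape-scenarios for $X$ (both $J_1,J_2$ nontrivial; only $J_1$; only $J_2$; $X$ monotone) at most one of these boundaries can actually host a jump: an interior $J_1$-to-$J_2$ transition uses both critical points of $X$, while each admissible endpoint configuration uses exactly one. In every case $\Psi$ has at most one discontinuity, which proves the theorem. The principal technical point to handle carefully is the ``almost sure everywhere'' argument of the third step; the remainder is bookkeeping on the critical points of the polynomial $X$.
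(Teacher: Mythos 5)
Your proposal is correct and takes essentially the same route as the paper's proof: both rest on the identity $\frac{\partial}{\partial y}F_1(y;x,\ekr)=r\binom{r-1}{k-1}y^{k-1}(1-y)^{r-k}-(2+\alpha)$, the unimodality of $y^{k-1}(1-y)^{r-k}$ giving at most two critical points and hence at most two continuous branches of stable roots, and the monotonicity of $\Psi$ to force at most one switch between branches. Your reparametrisation $F_1(y;x,\ekr)=(\alpha+1)\bigl(x-X(y)\bigr)$ is only a cosmetic repackaging of the paper's direct analysis of the monotonicity intervals of $F_1$ in $y$.
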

An example of a choice of $\Xi$ for which more than one point of condensation is possible appears in section \ref{secondorsixth}.

In the case where the same rank is always chosen, we can give a precise description of the phase transition.
\begin{thm}\label{threshold}
If $\Xi_k=1$ then condensation almost surely occurs if $\alpha<\alpha_{\mathrm c}$ and almost surely does not occur if $\alpha\geq\alpha_{\mathrm c}$, where
$\alpha_{\mathrm c}=r-2$ if $k=1$ or $k=r$, and
\begin{equation}
\alpha_{\mathrm c}=r\binom{r-1}{k-1}\frac{(k-1)^{k-1}(r-k)^{r-k}}{(r-1)^{r-1}}-2\label{alpha-crit}
\end{equation}
otherwise.\end{thm}
In particular, $\alpha_{\mathrm c}>-1$ for all values of $r$ and $k$, meaning that there is always a genuine phase transition. 
In fact $\alpha_{\mathrm c}>0$, giving condensation even for linear preferential attachment, except in a few small cases: 
first (or second) of two choices; middle of three; second (or third) of four; and middle of five.

\section{Proofs}\label{Section 3}

For the majority of this section we will restrict the model to the case where the choice between the $r$ selected vertices is deterministic, i.e.\ $v_{n+1}$ always attaches to the selected vertex with the $k$th lowest location for some fixed $k$. In the formal notation given above this model can be written as $\Xi=(0,0,\dots,1,\dots,0,0)$ where the $1$ is in the $k$th position; write $\ekr$ for this vector. We will deal with general $\Xi$ in Section \ref{gen-case}.

A key technique we use in our proofs is that of of stochastic approximation algorithms, originally developed by Robbins and Monro \cite{P.28}. 
Stochastic approximation methods appear naturally in preferential attachment models, and have been used, for example, 
by Malyshkin and Paquette \cite{P.8} and Dereich and Ortgiese \cite{P.4}. 
Stochastic approximation processes operate in discrete time with standard notation, based on Pemantle \cite{P.9},
\begin{equation}\label{gen-sae}
X_{n+1}-X_{n}=\gamma_n(F(X_n)+\xi_{n+1}+R_{n}),
\end{equation}
where $\{X_n,n\geq 1\}$ is a sequence of random variables on $\mathbb{R}^d$, $\gamma_n$ are step sizes satisfying $\sum_{n=1}^{\infty}\gamma_n=\infty$ 
and $\sum_{n=1}^{\infty}\gamma_n^2<\infty$, $F$ is a function from $\mathbb{R}^d$ to itself, $R_n$ are remainder terms which must tend to zero and satisfy $\sum_{n=1}^{\infty}n^{-1}|R_n|<\infty$, and $\xi_{n+1}$ are noise terms satisfying $\condex{\xi_{n+1}}=0$. 

We will mainly use results found in Section 2 of \cite{P.9}, which show that under certain conditions the process will converge almost surely to an equilibrium of $F$, that stable equilibria have positive probability of being the limit and that unstable equilibria usually do not.

\subsection{Proofs of Theorems \ref{main1} to \ref{main3} for deterministic choice rules}

Let $\mathcal{F}_{n}$ be the $\sigma$-algebra generated by the graphs $G_n$ and the locations of their vertices up until time $n$, i.e.\ $\mathcal{F}_{n}=\sigma(G_i,x_{i};i\leq n)$.
For $x\in [0,1]$, the probability of attaching to a vertex with location in $[0,x]$ at time $n+1$, conditional on $\mathcal F_n$, is given by
\begin{equation}\label{prob select vert leq x}
g\Bigl(\Psi_n(x);\ekr\Bigr)= \sum^{r}_{i=k}\binom ri\Psi_n(x)^{i}(1-\Psi_n(x))^{r-i}.
\end{equation}

We can now formulate the first stochastic approximation equation associated to our model, which will allow us to show that as the network grows the 
total weight of vertices with location in $[0,x]$ grows linearly. If $G_0$ has $n_0$ vertices and $e_0$ edges then $G_n$ has $n_0+n$ vertices and $e_0+n$ edges,
giving the normalising constant $\gamma_n=(2(e_0+n)+\alpha(n_0+n))^{-1}$. Assuming $G_0$ is a tree, we have the simpler form
$\gamma_n=((n+n_0-1)(2+\alpha)+\alpha)^{-1}$.

\begin{lem}\label{first sae} For a fixed $x\in[0,1]$, we have the stochastic approximation equation
\begin{equation}\label{f1(y;x,ekr)}
\Psi_{n+1}(x)-\Psi_{n}(x)=\gamma_{n+1}\Bigl(F_1\Bigl(\Psi_{n}(x);x,\ekr\Bigr)+\xi_{n+1}\Bigr),
\end{equation}
where $F_1\Bigl(y;x,\ekr\Bigr)=g\Bigl(y;\ekr\Bigr)-(2+\alpha)y+x(1+\alpha)$ and $\condex{\xi_{n+1}}=0$.
\end{lem}
\begin{proof}
Equation \eqref{f1(y;x,ekr)} is in the form of \eqref{gen-sae} with $F$ mapping $y$ to $F_1\Bigl(y;x,\ekr\Bigr)$ and $R_n\equiv 0$; 
clearly $\gamma_n$ has the required properties and so it suffices to show that a suitable $\xi_{n+1}$ may be defined.

Note that $\gamma_n^{-1}\Psi_n(x)$ is the total weight at time $n$ of all vertices located in $[0,x]$, where each vertex $v$ has weight $\deg_{G_n}(v)+\alpha$. 
Thus we consider the change to this total weight from the new vertex and edge; say that $v_{n+1}$ attaches to $w_{n+1}$.

The vertex $w_{n+1}$ has location at most $x$ with probability $g\Bigl(\Psi_{n}(x);\ekr\Bigr)$ from \eqref{prob select vert leq x} 
as the probability of attaching to the vertex with rank $k$ of $r$ selections. The expected addition to the total weight arising from 
the location of $v_{n+1}$ is $x(1+\alpha)$. Therefore
\begin{align*}
\condex{\gamma_{n+1}^{-1}\Psi_{n+1}(x)}&=\gamma_n^{-1}\Psi_{n}(x)+x(1+\alpha)+g\Bigl(\Psi_{n}(x);\ekr\Bigr)\\
&=\gamma_{n+1}^{-1}\Psi_n(x)-(2+\alpha)\Psi_{n}(x)+x(1+\alpha)+g\Bigl(\Psi_{n}(x);\ekr\Bigr)\\
&=\gamma_{n+1}^{-1}\Psi_n(x)+F_1\Bigl(\Psi_{n}(x);x,\ekr\Bigr),
\end{align*}
and so
\[\condex{\Psi_{n+1}(x)}=\Psi_{n}(x)+\gamma_{n+1}F_1\Bigl(\Psi_{n}(x);x,\ekr\Bigr).\]
Defining $\xi_{n+1}$ as
\begin{equation*}
\xi_{n+1}=\frac{\Psi_{n+1}(x)-\condex{\Psi_{n+1}(x)}}{\gamma_{n+1}},
\end{equation*}
we have \eqref{f1(y;x,ekr)} and clearly $\condex{\xi_{n+1}}=0$.
\end{proof}
Before continuing, we summarise the necessary results on one-dimensional stochastic approximations; 
recall the definitions of stable zeros, unstable zeros and touchpoints from Section \ref{ss:res}.
\cite[Corollary 2.7]{P.9} and \cite[Theorem 2.8]{P.9} apply on the assumption that $F$ is bounded and continuous and $\condex{\xi_{n+1}^2}\leq K$ for some finite $K$, 
and say, respectively, that $X_n$ converges almost surely to a zero of $F$ and that any stable zero has positive probability of being the limit. 
\cite[Theorem 2.9]{P.9} requires additionally that $\condex{\xi_{n+1}^+}$ and $\condex{\xi_{n+1}^-}$ are bounded above and below by positive numbers 
in a neighbourhood of $p$, and says that if $p$ is an unstable zero then it is almost surely not the limit. 
Finally, Theorem 2.5 of Antunovi\'{c}, Mossel and R\'{a}cz \cite{P.43}, based on work by Pemantle in \cite{P.44}, requires that $F$ is continuously differentiable 
and $\lvert\xi_{n+1}\rvert\leq K$ almost surely, and says that each touchpoint has positive probability of being the limit.
\begin{thm}\label{equstabroot}
Let $x\in(0,1)$. The sequence of random variables $\Psi_n(x)$ converges almost surely to a zero of $F_1\Bigl(y;x,\ekr\Bigr)$. Any stable zero in $[0,1]$ or touchpoint in $(0,1)$ has positive probability of being the limit, while any unstable zero has probability zero of being the limit.
\end{thm}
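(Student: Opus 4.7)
The plan is to read Theorem \ref{equstabroot} as a direct application of the one-dimensional stochastic approximation theory of Pemantle \cite{P.9} to the recursion
\[\Psi_{n+1}(x)-\Psi_n(x)=\gamma_{n+1}\bigl(F_1(\Psi_n(x);x,\ekr)+\xi_{n+1}\bigr)\]
supplied by Lemma \ref{first sae}. First I would verify the standard hypotheses: the step sizes $\gamma_{n+1}=1/[(n+n_0)(2+\alpha)+\alpha]$ are $\Theta(1/n)$ so $\sum_n\gamma_n=\infty$ and $\sum_n\gamma_n^2<\infty$; the remainder term is identically zero; the noise $\xi_{n+1}$ has conditional mean zero by construction, and it is bounded uniformly in $n$ because $|\Psi_{n+1}(x)-\Psi_n(x)|\leq C\gamma_{n+1}$ deterministically. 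The state $\Psi_n(x)$ is confined to the compact interval $[0,1]$, and $F_1(\cdot;x,\ekr)$ is a polynomial whose finitely many zeros in $[0,1]$ are isolated from the endpoints by the remark preceding Theorem \ref{main1}. Pemantle's basic convergence result then gives that $\Psi_n(x)$ converges almost surely into the zero set of $F_1(\cdot;x,\ekr)$; since that set is finite the limit is a single zero.

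For the positive-probability assertion for stable zeros and touchpoints, the key ingredient is that, for any non-empty open interval $I\subset(0,1)$ and any fixed $m$, there is positive probability that $\Psi_{n_0+m}(x)\in I$. This follows from an explicit inspection of the four possible one-step increments of the numerator of $\Psi_n(x)$, corresponding to whether the newborn vertex has location at most $x$ and whether the chosen attachment target has location at most $x$; each of these four outcomes has conditional probability bounded below by a positive constant whenever $\Psi_n(x)\in[\delta,1-\delta]$, so the process can be driven into any prescribed subinterval of $(0,1)$ in finitely many steps. For a stable zero $p$ the drift $F_1$ is strictly inward on a punctured neighborhood $(p-\epsilon,p+\epsilon)\setminus\{p\}$, so entering that neighborhood at a sufficiently large time and invoking a standard attractor-type theorem from \cite{P.9} gives positive probability of convergence to $p$. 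For a touchpoint, $F_1$ is one-signed on a punctured neighborhood of $p$, so $p$ is attracting from one side only; the same attractor argument, applied to the half-neighborhood on the attracting side, again yields a positive probability of convergence to $p$.

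The main obstacle is ruling out convergence to an unstable zero $p$, for which I would apply a non-convergence theorem of Pemantle type. The hypothesis to verify is a uniform lower bound on the conditional noise variance near $p$. I would establish this by observing that, whenever $\Psi_n(x)=y$ lies in a neighborhood of $p$ bounded away from $\{0,1\}$, the conditional distribution of the unnormalised one-step increment places probability bounded away from zero on at least two distinct values separated by $\Theta(1)$ (namely the four outcomes listed above), so that after subtracting the conditional mean the quantity $\mathbb{E}[\xi_{n+1}^2\mid\mathcal{F}_n]$ is bounded below by a positive constant uniformly on that neighborhood. Together with the fact that $F_1(\cdot;x,\ekr)$ is a polynomial with $F_1'(p;x,\ekr)>0$ at an unstable zero (giving the required strict linear repulsion), this supplies all the hypotheses of the non-convergence theorem and yields $\mathbb{P}(\Psi_n(x)\to p)=0$, completing the proof.
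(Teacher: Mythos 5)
Your overall strategy---reading Lemma \ref{first sae} as a one-dimensional Robbins--Monro recursion and feeding it into Pemantle's convergence, attractor and non-convergence theorems---is exactly the paper's approach, and your handling of convergence into the zero set, of stable zeros, and of unstable zeros is essentially sound. For the unstable case you propose a lower bound on $\mathbb{E}(\xi_{n+1}^2\mid\mathcal{F}_n)$ where the paper bounds $\mathbb{E}(\xi_{n+1}^{+}\mid\mathcal{F}_n)$ and $\mathbb{E}(\xi_{n+1}^{-}\mid\mathcal{F}_n)$ from below; since the noise is bounded and centred these are equivalent, and your four-outcome analysis of the one-step increment is the same argument the paper runs with the variable $\Phi_{n+1}$. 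Your explicit verification that the process reaches any prescribed subinterval of $(0,1)$ is a point of care the paper skips (though note that your phrasing ``any fixed $m$'' cannot be literally right, as $\Psi_{n_0+m}(x)$ takes only finitely many values for fixed $m$; you need $m$ to depend on the target interval, as your next sentence suggests you intend). One small caution: at an unstable zero of odd multiplicity at least three one has $F_1'(p;x,\ekr)=0$, so do not lean on ``strict linear repulsion''; use the sign-condition form of the non-convergence theorem, which is what the paper does.

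The genuine gap is the touchpoint case. You assert that since $F_1$ is one-signed near a touchpoint $p$, the point is attracting from one side, and that ``the same attractor argument, applied to the half-neighbourhood on the attracting side'' yields positive probability of convergence. This does not work: the half-neighbourhood is not forward-invariant. Say $F_1<0$ on $(p-\epsilon,p+\epsilon)\setminus\{p\}$, so the attracting side is $(p,p+\epsilon)$. The drift vanishes at $p$ while the noise does not, so whenever the process is near $p$ it crosses below $p$ with conditional probability bounded away from zero, and once below $p$ the drift actively expels it from the neighbourhood. The hypotheses of the standard attractor theorem (Theorem 2.8 of \cite{P.9}), namely drift pointing inward on \emph{both} sides, fail, and there is no invariant region on which to apply it. Whether a touchpoint is attained with positive probability is a genuinely delicate matter: the drift near $p$ is typically only quadratic in $y-p$, so the process is nearly a martingale there, and one must control the excursions across $p$ into the repelling region. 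The paper resolves this by citing a purpose-built result, Theorem 2.5 of Antunovi\'{c}, Mossel and R\'{a}cz \cite{P.43}, resting on Pemantle's analysis of touchpoints \cite{P.44}, whose hypotheses are exactly the two-sided noise bounds you already establish for the unstable case. As written, your argument for the touchpoint claim---which is what drives Theorem \ref{main3} and the non-persistent-hub form of condensation---is unsupported; you need either that citation or a self-contained excursion argument in its place.
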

\begin{proof}
First we note that $F_1\Bigr(0;x,\ekr\Bigr)>0$ and $F_1\Bigr(1;x,\ekr\Bigr)<0$. Therefore there must be at least one zero of $F_1\Bigl(y;x,\ekr\Bigr)$ in the interval $[0,1]$.

Since $y\mapsto F_1\Bigl(y;x,\ekr\Bigr)$ is a polynomial, it is continuously differentiable. Consequently, in order to apply the results detailed above, it suffices to check that $\xi_{n+1}$ has the required properties.

Recall that $\gamma_{n+1}^{-1}\Psi_{n+1}(x)$ is the total weight of vertices in $[0,x]$ at time $n+1$, so $\xi_{n+1}$ is the difference between the actual value of this quantity and its expectation at time $n$. Recall that $v_{n+1}$ has location $x_{n+1}$, and write $z_{n+1}$ for the location of its neighbour in $G_{n+1}$. 
Given $\mathcal F_n$, the total weight only depends on $x_{n+1}$ and $z_{n+1}$,
and takes values in $[\gamma_n^{-1}\Psi_n(x),\gamma_n^{-1}\Psi_n(x)+2+\alpha]$. 
Thus $\gamma_{n+1}^{-1}\condex{\Psi_{n+1}(x)}$ is also in this interval, meaning that
$\lvert\xi_{n+1}\rvert\leq 2+\alpha$, and so $\condex{\xi_{n+1}}\leq(2+\alpha)^2$. 

We now verify the conditions on $\xi_{n+1}^{+}$ and $\xi_{n+1}^{-}$. Since $\lvert\xi_{n}\rvert=\xi_{n}^{+}+\xi_{n}^{-}$, both of these are also bounded above.
Fix an unstable zero $p\in(0,1)$ and let $\epsilon$ be such that $0<p-\epsilon<p+\epsilon<1$. Provided $\Psi_n(x)\in(p-\epsilon,p+\epsilon)$, we have 
$\mathbb{P}(x_{n+1},z_{n+1}\leq x\mid\mathcal F_n)=xg\Bigl(\Psi_n(x);\ekr\Bigr)$ is bounded away from $0$, and similarly for $\mathbb{P}(x_{n+1},z_{n+1}> x\mid\mathcal F_n)$.
It follows that $\condex{\gamma_{n+1}^{-1}\Psi_{n+1}(x)}$ is bounded away from $\gamma_n^{-1}\Psi_n(x)$ but attains this value with probability bounded away from $0$, giving a lower bound on $\condex{\xi_{n+1}^-}$; similar reasoning applies to $\xi_{n+1}^+$.

Thus all the results described above apply in this setting, giving almost sure convergence to the zero set, positive probability of convergence to each stable zero or touchpoint,
and zero probability of convergence to each unstable zero.\end{proof}

The following result completes the proof of Theorem \ref{main1} in the case $\Xi=\ekr$.
\begin{cor}\label{limpsi}The sequence of measures defined by $\Psi_n$ converges weakly, almost surely, to a limit defined by a (possibly random) distribution function $\Psi:\mathbb R\to[0,1]$.
\end{cor}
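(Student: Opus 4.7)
The plan is to leverage the pointwise almost-sure convergence established in Theorem \ref{equstabroot} and upgrade it to weak convergence by a classical Helly-style argument, exploiting monotonicity of distribution functions.

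First I would fix a countable dense subset $Q\subseteq(0,1)$, for instance $Q=\mathbb{Q}\cap(0,1)$. By Theorem \ref{equstabroot}, for each individual $q\in Q$ there is an event of probability one on which $\Psi_n(q)$ converges to a zero of $F_1(\,\cdot\,;q,\ekr)$. Intersecting these countably many full-probability events yields a single event $\Omega_0$ of probability one on which $\Psi_n(q)$ converges simultaneously for \emph{every} $q\in Q$ to some limit we denote $\Psi(q)$. Moreover, at the endpoints we have $\Psi_n(0)=0$ and $\Psi_n(1)=1$ deterministically, because all vertex locations lie in $(0,1)$ and the total weight is normalised. Set $\Psi(0)=0$ and $\Psi(1)=1$.

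Next I would extend $\Psi$ from $Q\cup\{0,1\}$ to all of $[0,1]$ by right-continuity: for $x\in[0,1)$ set $\Psi(x)=\inf\{\Psi(q):q\in Q,\,q>x\}$. Since each $\Psi_n$ is non-decreasing and $[0,1]$-valued, $\Psi$ restricted to $Q$ is non-decreasing and $[0,1]$-valued, so the extension is a non-decreasing, right-continuous function from $[0,1]$ to $[0,1]$ with $\Psi(0)=0$ and $\Psi(1)=1$. Hence $\Psi$ is, on $\Omega_0$, the distribution function of a genuine Borel probability measure on $[0,1]$. Because $\Psi$ depends on the random trajectory it is in general a random distribution function, but this causes no difficulty as everything has been constructed path-by-path on $\Omega_0$.

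It then remains to show, on $\Omega_0$, that $\Psi_n$ converges weakly to $\Psi$; for probability measures on $[0,1]$ this is equivalent to $\Psi_n(x)\to\Psi(x)$ at every continuity point $x$ of $\Psi$. For such an $x\in(0,1)$ and any $\varepsilon>0$, by density of $Q$ and continuity of $\Psi$ at $x$ choose $q_1,q_2\in Q$ with $q_1<x<q_2$ and $\Psi(q_2)-\Psi(q_1)<\varepsilon$. Monotonicity of $\Psi_n$ gives
\[
\Psi_n(q_1)\le \Psi_n(x)\le \Psi_n(q_2),
\]
and letting $n\to\infty$ yields $\Psi(q_1)\le \liminf_n \Psi_n(x)\le \limsup_n \Psi_n(x)\le \Psi(q_2)$, so both limits lie within $\varepsilon$ of $\Psi(x)$. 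Letting $\varepsilon\to 0$ concludes the argument; the continuity points $0$ and $1$ are handled trivially.

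I do not expect a serious obstacle: everything reduces to the standard fact that monotone bounded functions converging on a dense set converge at continuity points of any non-decreasing extension. The only point demanding a little care is verifying that the right-continuous extension of $\Psi$ really is a probability distribution function, which follows because the total mass of $\nu_n$ is identically $1$ and the space $[0,1]$ is compact, so no mass can escape in the limit.
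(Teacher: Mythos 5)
Your proposal is correct and follows essentially the same route as the paper: apply Theorem \ref{equstabroot} on a countable dense subset of $(0,1)$, intersect the resulting full-probability events, set $\Psi(0)=0$ and $\Psi(1)=1$, and extend to a cadlag (right-continuous non-decreasing) function, which then determines the weak limit. You simply spell out the standard monotonicity sandwich at continuity points that the paper leaves implicit.
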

\begin{proof}By definition, we have that for each $n$ $\Psi_n$ is a non-decreasing cadlag function with $\Psi_n(1)=1$ and, almost surely, $\Psi_n(0)=0$. We apply Theorem \ref{equstabroot} to a countable dense set of $x\in(0,1)$ and for $x$ in this set we define $\hat{\Psi}(x)=\lim_{n\to\infty}\Psi_n(x)$. We then define a cadlag function $\Psi$ by defining $\Psi(x)=\inf_{y>x}\hat{\Psi}(y)$ for $x\in[0,1)$, $\Psi(x)=0$ for $x<0$ and $\Psi(x)=1$ for $x\geq 1$.  By this construction, the probability measure with distribution function $\Psi$ is a weak limit of the sequence of probability measures with distribution functions $\Psi_n$.\end{proof}


We now move towards proving Theorem \ref{main2} in the case $\Xi=\ekr$. To do this, we consider a two dimensional stochastic approximation for $(\Psi_n(z),D_n)$, where $D_n$ is given by \eqref{def-dn}. Let $\chi_n$ be the location of a selected vertex under preferential attachment from $G_n$. Assuming that $v_0$ is the only vertex at location $z$, which occurs almost surely, we have
\begin{align*}
P(\chi_n =z\mid\mathcal{F}_{n})=&D_{n},\\
P(\chi_n <z\mid\mathcal{F}_{n})=&\Psi_{n}(z)-D_{n},\\
P(\chi_n >z\mid\mathcal{F}_{n})=&1-\Psi_{n}(z).
\end{align*}
The probability of the $k$th ranked location being $z$, and hence of selecting vertex $v_0$ for $v_{n+1}$ to attach to is given by
\begin{equation}\label{equ of select vert v1}
h\Bigl(\Psi_n(z),D_n;\ekr\Bigr)=\sum^{k-1}_{j=0}\sum^{r}_{i=k}\binom{r}{i}\binom{i}{j}(\Psi_n(z)-D_n)^jD_n^{i-j}(1-\Psi_n(z))^{r-i}.
\end{equation}
We can now form our two dimensional stochastic approximation.
\begin{lem}\label{second sae}
We have the stochastic approximation equation
\begin{equation}
D_{n+1}-D_{n}=\gamma_{n+1}\Bigl(F_{2}\Bigl(\Psi_{n}(z),D_{n};\ekr\Bigr)+\zeta_{n+1}\Bigr),\label{sec-sae-eq}
\end{equation}
where $F_{2}\Bigl(y,d;\ekr\Bigr)=h\Bigl(y,d;\ekr\Bigr)-(2+\alpha)d$ and $\mathop{\mathbb{E}}(\zeta_{n+1}\mid\mathcal{F}_{n})=0$.
\end{lem}
\begin{proof}
As in the proof of Lemma \ref{first sae}, \eqref{sec-sae-eq} is a stochastic approximation with $R_n\equiv 0$ provided that a suitable $\zeta_{n+1}$ may be defined. Writing $w_{n+1}$ for the vertex $v_{n+1}$ attaches to, we have
\begin{align*}\condex{D_{n+1}}&=\gamma_{n+1}\bigl(\mathop{\mathbb{E}}\bigl(\deg_{G_{n+1}}(v_0)\bigr)+\alpha\bigr)\\
&=\gamma_{n+1}\bigl(\deg_{G_{n}}(v_0)+\alpha+\mathbb{P}(w_{n+1}=v_0\mid\mathcal F_n)\bigr)\\
&=\gamma_{n+1}\Bigl(\gamma_n^{-1}D_n+h\Bigl(\Psi_n(z),D_n;\ekr\Bigr)\Bigr),\end{align*}
using \eqref{equ of select vert v1}. Since $\gamma_n^{-1}\gamma_{n+1}^{}=1-(2+\alpha)\gamma^{}_{n+1}$, 
it follows that
\[\condex{D_{n+1}}=D_n+\gamma_{n+1}F_{2}\Bigl(\Psi_{n}(z),D_{n};\ekr\Bigr).\]
Defining
\begin{equation*}
\zeta_{n+1}=\frac{D_{n+1}-\mathop{\mathbb{E}}(D_{n+1}\mid\mathcal{F}_{n})}{\gamma_{n+1}},
\end{equation*}
we have \eqref{sec-sae-eq} with $\condex{\zeta_{n+1}}=0$.
\end{proof}
We have now formed a two dimensional system of stochastic approximation equations represented by
\begin{equation}
\begin{pmatrix}
\Psi_{n+1}(z) \\
D_{n+1} 
\end{pmatrix}
-
\begin{pmatrix}
\Psi_{n}(z) \\
D_{n} 
\end{pmatrix}
=\gamma_{n+1}
\begin{pmatrix}
F_{1}(\Psi_{n}(z);x,\Xi) \\
F_{2}(\Psi_{n}(z),D_{n};\Xi) 
\end{pmatrix}
+
\gamma_{n+1}
\begin{pmatrix}
\xi_{n+1} \\
\zeta_{n+1}
\end{pmatrix}.
\end{equation}

The following relationship between $F_1$ and $F_2$ will be useful for identifying stationary points of the vector field associated to our two dimensional stochastic approximation.
\begin{thm}\label{relationship theorem}
We have that
\begin{equation}\label{relationship equation}
F_1\Bigl(y-d;x,\ekr\Bigr)=F_1\Bigl(y;x,\ekr\Bigr)-F_2\Bigl(y,d;\ekr\Bigr).
\end{equation}
\end{thm}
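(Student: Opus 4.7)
The plan is to substitute the explicit formulas for $F_{1}$ and $F_{2}$ into the claim, cancel the terms not involving $g$ or $h$, and reduce the statement to a single combinatorial identity relating $g(\cdot;\ekr)$ and $h(\cdot,\cdot;\ekr)$, which admits a transparent probabilistic proof.

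First I would unfold the definitions $F_{1}(y;x,\ekr)=g(y;\ekr)-(2+\alpha)y+x(1+\alpha)$ and $F_{2}(y,d;\ekr)=h(y,d;\ekr)-(2+\alpha)d$ on both sides. The terms linear in $y$ and $d$ contribute $-(2+\alpha)(y-d)$ on each side, and the $(1+\alpha)$ location terms likewise match (with $x=z$), so the claim collapses to the identity
\begin{equation*}
g(y-d;\ekr)=g(y;\ekr)-h(y,d;\ekr).
\end{equation*}

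Next I would give this its natural probabilistic meaning. Consider $r$ independent preferential-attachment selections from $G_{n}$, conditional on $\mathcal{F}_{n}$, when $v_{0}$ is the unique vertex at location $z$ (an almost sure event). A single such selection lies in $[0,z)$, at $\{z\}$, or in $(z,1]$ with respective probabilities $y-d$, $d$, $1-y$, where $y=\Psi_{n}(z)$ and $d=D_{n}$. By definition \eqref{prob select vert leq x}, $g(y;\ekr)$ is the probability that the $k$-th smallest of the $r$ locations is at most $z$, while $g(y-d;\ekr)$ is the probability that it is strictly below $z$. By definition \eqref{equ of select vert v1}, the summand of $h(y,d;\ekr)$ indexed by a pair $(i,j)$ with $j<k\leq i$ is the multinomial probability that exactly $i$ of the $r$ selections lie in $[0,z]$ and exactly $j$ of them strictly below $z$; summing over all such pairs gives the probability that the $k$-th smallest equals $z$. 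The disjoint decomposition of $\{k\text{-th smallest}\leq z\}$ into $\{k\text{-th smallest}<z\}$ and $\{k\text{-th smallest}=z\}$ then gives the identity immediately.

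A purely algebraic route is available as a sanity check: expand $y^{i}=\bigl((y-d)+d\bigr)^{i}$ inside the definition of $g(y;\ekr)$ and split the resulting double sum according to whether the exponent $j$ of $(y-d)$ is less than $k$ or at least $k$, use $\binom{r}{i}\binom{i}{j}=\binom{r}{j}\binom{r-j}{i-j}$ to refactor, and recognise $(d+(1-y))^{r-j}=(1-(y-d))^{r-j}$, so that the $j\geq k$ piece collapses to $g(y-d;\ekr)$ while the $j<k$ piece is exactly $h(y,d;\ekr)$. The proof is essentially multinomial bookkeeping; the only substantive observation is that $h$ encodes the probability of the $k$-th ordered selection coinciding with $v_{0}$, and there is no serious obstacle beyond this.
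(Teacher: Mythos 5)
Your proof is correct, but it takes a genuinely different route from the paper's. The paper proves \eqref{relationship equation} by induction on $k$: the base case $k=1$ is a direct computation, and the inductive step peels off the term $\binom{r}{k}y^{k}(1-y)^{r-k}$ from $g(y;\basisvector{k+1}{r})$ and matches it against the corresponding change in $h$, at the cost of a page of binomial bookkeeping. You instead cancel the affine terms (correctly reading the $x$ on the left and the $z$ on the right as the same location, which is a notational slip in the statement) and reduce the claim to the single identity $g(y-d;\ekr)=g(y;\ekr)-h(y,d;\ekr)$, which you then prove for all $k$ at once, either probabilistically or by expanding $y^{i}=\bigl((y-d)+d\bigr)^{i}$. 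The probabilistic reading --- that $g(y;\ekr)$, $g(y-d;\ekr)$ and $h(y,d;\ekr)$ are respectively the probabilities that the $k$-th order statistic of the $r$ sampled locations is at most $z$, strictly less than $z$, and equal to $z$, and that the latter two events partition the first --- is the real content of the lemma and explains why it holds, which the paper's induction obscures. The only point worth adding is that this argument establishes the identity a priori only for $(y,d)$ with $0\leq d\leq y\leq 1$; since both sides are polynomials agreeing on a set with nonempty interior they agree identically, or one can simply fall back on your trinomial expansion, whose key steps $\binom{r}{i}\binom{i}{j}=\binom{r}{j}\binom{r-j}{i-j}$ and $\bigl(d+(1-y)\bigr)^{r-j}=\bigl(1-(y-d)\bigr)^{r-j}$ both check out.
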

\begin{proof}
We use induction on $k$. For $k=1$,
\begin{align*}
F_1\Bigl(y-d;x,\basisvector{1}{r}\Bigr)={}&1-(1-y+d)^r -(2+\alpha)(y-d)+x(1+\alpha)\\
={}&-(2+\alpha)(y) + x(1+\alpha) + \sum^{r}_{i=1}\binom{r}{i}y^{i}(1-y)^{r-i} \\
&-\Biggl(-(2+\alpha)d+\sum^{r}_{i=1}\binom{r}{i}d^{i}(1-y)^{r-i}\Biggr)\\
={}&F_1\Bigl(y;x,\basisvector{1}{r}\Bigr)-F_2\Bigl(y,d;\basisvector{1}{r}\Bigr)
\end{align*}
Assuming \eqref{relationship equation} holds for $k$,
\begin{align*}
F_1\Bigl(y-d;x,\basisvector{k+1}{r}\Bigr)
={}&\sum^{r}_{i=k+1}\binom{r}{i}(y-d)^{i}(1-y+d)^{r-i}-(2+\alpha)(y-d)+x(\alpha+1)\\
={}&F_1\Bigl(y-d;x,\ekr\Bigr)-\binom{r}{k}(y-d)^{k}(1-y+d)^{r-k}\\
={}&F_1\Bigl(y;x,\ekr\Bigr)-F_2\Bigl(y,d;\ekr\Bigr)-\binom{r}{k}(y-d)^{k}(1-y+d)^{r-k}\\ ={}&F_1\Bigl(y;x,\basisvector{k+1}{r}\Bigr)-F_2\Bigl(y,d;\ekr\Bigr)-\binom{r}{k}(y-d)^k(1-y+d)^{r-k}\\ &+\binom{r}{k}y^k(1-y)^{r-k}\end{align*} where in the last line we have separated out the first term in the sum in the definition of $F_1\Bigl(y;x,\ekr\Bigr)$.
By the definition of $F_2\Bigl(y,d;\ekr\Bigr)$ and binomial expansion of $(1-y+d)^{r-k}$ and $y^k=(y-d+d)^k$, we get that $F_1\Bigl(y-d;x,\basisvector{k+1}{r}\Bigr)$ is equal to \begin{align*}
{}&F_1\Bigl(y;x,\basisvector{k+1}{r}\Bigr)-(2+\alpha)d-\sum^{k-1}_{j=0}\sum^{r}_{i=k}\binom{r}{i}\binom{i}{j}(y-d)^jd^{i-j}(1-y)^{r-i}\\
&-\sum^{r}_{i=k}\binom{r}{i}\binom{i}{k}(y-d)^kd^{i-k}(1-y)^{r-i}+\sum^{k}_{j=0}\binom{r}{k}\binom{k}{j}(y-d)^jd^{k-j}(1-y)^{r-k}.\end{align*}
Re-arranging the sums gives that 
\begin{align*}
{}F_1\Bigl(y-d;x,\basisvector{k+1}{r}\Bigr)=&F_1\Bigl(y;x,\basisvector{k+1}{r}\Bigr)-\sum^{k-1}_{j=0}\sum^{r}_{i=k}\binom{r}{i}\binom{i}{j}(y-d)^jd^{i-j}(1-y)^{r-i}\\
&-\sum^{k}_{j=k}\sum^{r}_{i=k}\binom{r}{i}\binom{i}{j}(y-d)^jd^{i-j}(1-y)^{r-i} \\
&+\sum^{k}_{j=0}\sum^{k}_{i=k}\binom{r}{i}\binom{i}{j}(y-d)^jd^{i-j}(1-y)^{r-i}-(2+\alpha)d\\
={}&F_1\Bigl(y;x,\basisvector{k+1}{r}\Bigr)-\Biggl(\sum^{k}_{j=0}\sum^{r}_{i=k+1}\binom{r}{i}\binom{i}{j}(y-d)^jd^{i-j}(1-y)^{r-i}+(2+\alpha)d\Biggr)\\
={}&F_1\Bigl(y;x,\basisvector{k+1}{r}\Bigr)-F_2\Bigl(y,d;\basisvector{k+1}{r}\Bigr),
\end{align*}
completing the proof.\end{proof}

Let $\mathcal{Y}(z)$ be the set $\Bigl\{y:F_1\Bigl(y_i;z,\ekr\Bigr)=0\Bigr\}$, and write its elements $y_1,y_2,\ldots,y_{|\mathcal{Y}(z)|}$.  It then follows from Theorem \ref{relationship theorem} that $F_2\Bigl(y_i,y_i-y_j;z,\ekr\Bigr)=0$ and that the solutions to $F_1\Bigl(y;z,\ekr\Bigr)=F_2\Bigl(y,d;\ekr\Bigr)=0$ all take the form $(y,d)=(y_i,y_i-y_j)$ where $i,j\in\{1,2,\dots,|\mathcal{Y}(z)|\}$.  Note also that if $y<\frac{(1+\alpha)x}{2+\alpha}$ then $F_1\Bigl(y_i;z,\ekr\Bigr)>0$, and similarly if $y>1-\frac{(1+\alpha)(1-x)}{2+\alpha}$ then $F_1\Bigl(y_i;z,\ekr\Bigr)<0$, so any $y\in \mathcal{Y}(z)$ satisfies \begin{equation}\label{y-inequality}\frac{(1+\alpha)x}{2+\alpha}\leq y \leq 1-\frac{(1+\alpha)(1-x)}{2+\alpha}.\end{equation}

To investigate the stability of the stationary points, we will now calculate the Jacobian $M$ of the two dimensional system. We can observe that $M$ is an upper triangular matrix because $F_1\Bigl(y;z,\ekr\Bigr)$ does not depend on $d$ so $\frac{\partial F_1}{\partial d}=0$. Therefore the eigenvalues of our system are
\begin{align*}
\lambda_1\Bigl(y;\ekr\Bigr)&=\sum^{r}_{i=k}\binom{r}{i}y^{i-1}(1-y)^{r-i-1}(i-ry)-(2+\alpha),\\
\lambda_2\Bigl(y,d;\ekr\Bigr)&=\sum^{k-1}_{j=0}\sum^{r}_{i=k}\binom{r}{i}\binom{i}{j}Q_{i,j}^{(r)}(y,d)(y-d)^{j-1}d^{i-j-1}(1-y)^{r-i-1}
-(2+\alpha),
\end{align*}
where $Q_{i,j}^{(r)}(y,d)=(y-d)(i-iy+rd-id)+j(1-y)(2d-y)$.

\begin{thm}\label{only one eig needed}
For any $y_i,y_j\in\mathcal{Y}(z)$ such that $y_i-y_j\geq0$ and
\begin{equation*}
\frac{\partial}{\partial y}F_1\Bigl(y;z,\ekr\Bigr)=\lambda_1\Bigl(y;\ekr\Bigr)<0
\end{equation*}
is satisfied for both $y=y_i$ and $y=y_j$,
we have that $(y_i,y_i-y_j)$ is a stable equilibrium of the vector field $\Bigl(F_1\Bigl(y;z,\ekr\Bigr),F_2\Bigl(y,d;\ekr\Bigr)\Bigr)$.
\end{thm}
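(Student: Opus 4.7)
The plan is to exploit the lower triangular structure of the Jacobian of $(F_1,F_2)$ together with the algebraic identity of Theorem \ref{relationship theorem} to express both eigenvalues at the candidate equilibrium $(y_i,y_i-y_j)$ purely in terms of $\lambda_1$ evaluated at $y_i$ and at $y_j$. Once this reduction is accomplished, the hypothesis that $y_i$ and $y_j$ are both stable zeros of $F_1(\cdot;z,\ekr)$ will immediately force both eigenvalues to be strictly negative, whereupon stability of the equilibrium follows from standard linear stability theory.

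First I would note that since $F_1(y;z,\ekr)$ does not depend on $d$, the Jacobian $M$ is lower triangular, so its eigenvalues are the diagonal entries $\lambda_1(y;\ekr)=\partial_y F_1(y;z,\ekr)$ and $\lambda_2(y,d;\ekr)=\partial_d F_2(y,d;\ekr)$, as recorded immediately before the statement. Next, differentiating the identity $F_1(y-d;z,\ekr)=F_1(y;z,\ekr)-F_2(y,d;\ekr)$ of Theorem \ref{relationship theorem} with respect to $d$ at fixed $y$, the chain rule produces $-\lambda_1(y-d;\ekr)$ on the left and $-\lambda_2(y,d;\ekr)$ on the right, yielding the key identity $\lambda_2(y,d;\ekr)=\lambda_1(y-d;\ekr)$. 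Specialising at $(y,d)=(y_i,y_i-y_j)$, where $y-d=y_j$, gives $\lambda_2(y_i,y_i-y_j;\ekr)=\lambda_1(y_j;\ekr)$, so the two eigenvalues of $M$ at the equilibrium are precisely $\lambda_1(y_i;\ekr)$ and $\lambda_1(y_j;\ekr)$, both strictly negative by hypothesis. The constraint $y_i-y_j\geq 0$ ensures the point lies in the physically meaningful region $\{(y,d):0\leq d\leq y\leq 1\}$, and by standard linear stability theory a hyperbolic stationary point whose linearisation has spectrum in the open left half-plane is locally asymptotically stable, which is exactly the notion of stable equilibrium of the vector field $(F_1,F_2)$ required by the statement.

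The main obstacle is obtaining the identification $\lambda_2(y,d;\ekr)=\lambda_1(y-d;\ekr)$ cleanly: a direct verification, by differentiating the explicit sum defining $\lambda_2$ and matching it term-by-term against $\lambda_1(y-d;\ekr)$, would be combinatorially painful, so the key trick is to leverage Theorem \ref{relationship theorem} and reduce the entire computation to a single application of the chain rule. Everything else, including the linear-algebra step of reading off eigenvalues from a triangular matrix and the invocation of local stability for a linearisation with negative eigenvalues, is routine.
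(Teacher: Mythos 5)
Your proposal is correct and follows essentially the same route as the paper: both exploit the triangularity of the Jacobian and differentiate the identity of Theorem \ref{relationship theorem} with respect to $d$ to obtain $\lambda_2(y,d;\ekr)=\lambda_1(y-d;\ekr)$, then evaluate at $(y_i,y_i-y_j)$ so that the eigenvalues become $\lambda_1(y_i;\ekr)$ and $\lambda_1(y_j;\ekr)$, both negative by hypothesis. (The paper calls the Jacobian upper triangular where you say lower triangular, but this is purely a matter of convention and does not affect the argument.)
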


\begin{proof} By rearranging \eqref{relationship equation} we can see that
\[
F_2\Bigl(y,d;\ekr\Bigr)=F_1\Bigl(y;z,\ekr\Bigr)-F_1\Bigl(y-d;z,\ekr\Bigr)
\]
and can deduce that
\[\lambda_2\Bigl(y,d;\ekr\Bigr)=\frac{\partial}{\partial d}\Bigl(F_1\Bigl(y;z,\ekr\Bigr)-F_1\Bigl(y-d;z,\ekr\Bigr)\Bigr).\]
Here $F_1\Bigl(y;z,\ekr\Bigr)$ does not depend on $d$, giving $\frac{\partial}{\partial d}F_1\Bigl(y;z,\ekr\Bigr)=0$. Consequently, $\lambda_2\Bigl(y,d;\ekr\Bigr)=-\frac{\partial}{\partial d}F_1\Bigl(y-d;z,\ekr\Bigr)=\lambda_1\Bigl(y-d;\ekr\Bigr)$. All roots of $F_1\Bigl(y;z,\ekr\Bigr)=F_2\Bigl(y,d;\ekr\Bigr)=0$ are of the form $(y_i,y_i-y_j)$. Evaluating eigenvalues at this point gives $\lambda_1\Bigl(y_i;\ekr\Bigr)$ and $\lambda_2\Bigl(y_i,d_i;\ekr\Bigr)=\lambda_1\Bigl(y_j;\ekr\Bigr)$, which, referring to our initial conditions, are both negative. Therefore the pair $y_i$ and $y_j$ form the possible limit $(y_i,y_i-y_j)$.
\end{proof}

\begin{cor}\label{2dlimits}If $y_i\geq y_j$ are two stable fixed points of $F_1\Bigl(y;z,\ekr\Bigr)$, then there is positive probability of $(\Psi_n(z),D_n)\to (y_i,y_i-y_j)$ as $n\to\infty$.\end{cor}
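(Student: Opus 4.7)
The plan is to combine Theorem \ref{only one eig needed} with a standard multi-dimensional stochastic approximation result applied to the joint process $(\Psi_n(z), D_n)$ set up in Lemmas \ref{first sae} and \ref{second sae}. Theorem \ref{only one eig needed} has already established that $(y_i, y_i - y_j)$ is a linearly stable equilibrium of the 2D vector field $(F_1(y;z,\ekr), F_2(y,d;\ekr))$: the Jacobian there is upper triangular with diagonal entries $\lambda_1(y_i;\ekr)$ and $\lambda_1(y_j;\ekr)$, both strictly negative by the stability hypothesis on $y_i$ and $y_j$. The corollary is then what one obtains by invoking the multi-dimensional analogue of Pemantle's Theorem 2.8 from \cite{P.9}.

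What remains is to verify the hypotheses of that multi-dimensional result for our system. First, the step sizes $\gamma_{n+1}^{-1} = ((n+n_0)(2+\alpha)+\alpha)^{-1}$ obey $\sum_n \gamma_n^{-1} = \infty$ and $\sum_n \gamma_n^{-2} < \infty$. Second, the noise vector $(\xi_{n+1}, \zeta_{n+1})$ has bounded coordinates: $|\xi_{n+1}| \leq 2$ by the argument already given in the proof of Theorem \ref{equstabroot}, and the same argument yields $|\zeta_{n+1}| \leq 2$ since the increment $D_{n+1} - D_n$ is likewise at most of order $\gamma_{n+1}^{-1}$ in absolute value. Hence $\mathbb{E}(\xi_{n+1}^2 + \zeta_{n+1}^2 \mid \mathcal{F}_n)$ is bounded by an absolute constant. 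Third, the drift components $F_1, F_2$ are polynomials and therefore smooth, and no remainder terms appear.

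The main delicate point will be verifying the reachability condition, namely that the joint process has positive probability of entering any given neighborhood of $(y_i, y_i - y_j)$ inside its basin of attraction. For this, I would imitate the argument at the end of the proof of Theorem \ref{equstabroot}: at each step $n+1$, conditional on $\mathcal{F}_n$, the attachment event falls into each of the three categories ``new edge incident to $v_0$'', ``new edge incident to a non-$v_0$ vertex at location below $z$'', and ``new edge incident to a vertex at location above $z$'' with positive conditional probability (at least once the graph has vertices of each type), while independently the new location $x_{n+1}$ is a fresh uniform variable. These two sources of randomness together provide enough spread in the joint noise to push $(\Psi_n(z), D_n)$ into an arbitrary open subset of $(0,1)^2$ within finitely many steps with positive probability. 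Combined with the linear stability from Theorem \ref{only one eig needed}, the multi-dimensional attractor theorem then yields positive probability of convergence to $(y_i, y_i - y_j)$, which is the statement of the corollary.
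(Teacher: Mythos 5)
Your approach is essentially the paper's: the paper likewise deduces from Theorem \ref{only one eig needed} that $(y_i,y_i-y_j)$ is a stable stationary point of the vector field $(F_1(y;z,\ekr),F_2(y,d;\ekr))$ and then invokes the multidimensional attractor theorem of \cite{P.9} (cited there as Theorem 2.16, which is the ``multi-dimensional analogue of Theorem 2.8'' you describe), with your verification of the step-size and noise hypotheses and of reachability being details the paper leaves implicit. One small correction: the joint process cannot reach an arbitrary open subset of $(0,1)^2$, since $D_n\leq\Psi_n(z)$ always and $D_n$ is bounded asymptotically by roughly $1/(2+\alpha)$, but only reachability of a neighbourhood of $(y_i,y_i-y_j)$ itself is needed, and since $y_i-y_j=(g(y_i;\ekr)-g(y_j;\ekr))/(2+\alpha)<1/(2+\alpha)$ and $y_i-y_j\leq y_i$, your steering argument does supply that.
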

\begin{proof} Theorem \ref{only one eig needed} shows that $(y_i,y_i-y_j)$ is a stable stationary point of the vector field $\Bigl(F_1\Bigl(y;z,\ekr\Bigr),F_2\Bigl(y,d;\ekr\Bigr)\Bigr)$. The conclusion then follows from Theorem 2.16 of Pemantle \cite{P.9}, as long as we can show that there is no $t$ for which $\bigl(\Psi_{t+n}(z),D_{t+n}\bigr)_{n\geq 0}$ almost surely avoids some neighbourhood of $(y_i,y_i-y_j)$.  To see this, note that $D_{t+n}$ depends only on the number of vertices added up to time $t+n$ which connect to the vertex $v_0$, and that $\Psi_{t+n}(z)$ depends only on the number of vertices added up to time $t+n$ which connect to vertices with location less than or equal to $z$ and the number of those vertices which have locations less than or equal to $z$ themselves.

Any integer between $\deg_{G_0}(v_0)$ and $\deg_{G_0}(v_0)+t+n$ inclusive has positive probability as a value with for $\deg_{G_{t+n}}(v_0)$; by the definition of $D_{t+n}$ this ensures that for sufficiently large $n$ there is positive probability of $D_{t+n}$ being in any given subinterval of $\bigl[0,\frac{1}{2+\alpha}\bigr]$.  By similar arguments for $\Psi_{t+n}(z)$, $\bigl(\Psi_{t+n}(z),D_{t+n}\bigr)_{n \geq 0}$ can, with positive probability, approach $(y_i,y_i-y_j)$ arbitrarily closely, as long as $0\leq y_i\leq 1$ and $0\leq y_i-y_j \leq \frac{1}{2+\alpha}$.  That these conditions are satisfied follows from \eqref{y-inequality} and the assumption that $y_i\geq y_j$.\end{proof}

This completes the proof of Theorem \ref{main2} in the case $\Xi=\ekr$.

To prove Theorem \ref{main3}, we first note that where $p$ is a touchpoint of $F_1\Bigl(y;x,\ekr\Bigr)$ with $F_1\Bigl(y;x,\ekr\Bigr)$ non-positive in a neighbourhood of $p$ there will be a neighbourhood of $p$ which contains no zeros of $F_1(y;x-u,\Xi)$ for positive $u$. Hence the probability of $\Psi(x-u)$ being in this neighbourhood of $p$ is zero, but we know from Theorem \ref{equstabroot} that there is positive probability that $\lim_{n\to\infty} \Psi_n(x)=p$. Hence there is positive probability of a discontinuity at $x$. The same applies, with $x-u$ replaced by $x+u$, if $F_1\Bigl(y;x,\ekr\Bigr)$ is non-negative in a neighbourhood of $p$, completing the proof.

\subsection{Proofs of Theorems \ref{main1} to \ref{main3} in the general case}\label{gen-case}

We now extend the proofs of Theorems \ref{main1} to \ref{main3} in the case where $\Xi$ is not necessarily equal to $\ekr$. We can derive
\begin{align*}
F_1(y;x,\Xi)&=\sum^r_{l=1}\Xi_l F_1\Bigl(y;x,\elr\Bigr)\\
&=x(\alpha+1)-(2+\alpha)y+\sum^{r}_{l=1}\Xi_l \sum^{r}_{i=l}\binom{r}{i}y^{i}(1-y)^{r-i}
\end{align*}
and extend the definition of $F_2$ from Lemma \ref{second sae} as
\begin{align*}
F_2(y,d;\Xi)&=\sum^r_{l=1}\Xi_l F_2\Bigl(y,d;\elr\Bigr)\\
&=\Biggl(\sum^{r}_{l=1}\Xi_l \sum^{l-1}_{j=0}\sum^{r}_{i=l}\binom{r}{i}\binom{i}{j}(y-d)^j d^{i-j}(1-y)^{r-i}\Biggr)-(2+\alpha)d.
\end{align*}
We can see that Lemmas \ref{first sae} and \ref{second sae} still hold, and the arguments for Theorem \ref{equstabroot} and Corollary \ref{limpsi} work in the same way as for the case $\Xi=\ekr$, completing the proof of Theorem \ref{main1}.

By considering the above expressions for $F_1(y;x,\Xi)$ and $F_2(y;x,\Xi)$ as weighted sums of $F_1\Bigl(y;x,\elr\Bigr)$ and $F_2\Bigl(y;x,\elr\Bigr)$ respectively, we can see that Theorem \ref{relationship theorem} still holds, so if we let $\mathcal{Y}(z)=\{y_1,y_2,\dots,y_{\lvert\mathcal{Y}(z)\rvert}\}$ be the set of zeros of $F_1(y;z,\Xi)$, the stationary points are still of the form $(y_i,y_i-y_j)$. It is easy to see that the eigenvalues of the Jacobian are now
\begin{align*}
\lambda_1(y;\Xi)&=-(2+\alpha)+\sum^r_{l=1}\Xi_l \frac{\partial}{\partial y}F_1\Bigl(y;z,\elr\Bigr)\\
&=-(2+\alpha)+\sum^r_{l=1}\Xi_l \lambda_1\Bigl(y;\elr\Bigr);\\
\lambda_2(y,d;\Xi)&=-(2+\alpha)+\sum^r_{l=1}\Xi_l \frac{\partial}{\partial d}F_2\Bigl(y,d;\elr\Bigr)\\
&=-(2+\alpha)+\sum^r_{l=1}\Xi_l \lambda_1\Bigl(y-d;\elr\Bigr).
\end{align*}
Consequently Theorem \ref{only one eig needed} can also be extended to this case: if we have $y_i,y_j\in\mathcal{Y}(z)$ with $y_i\geq y_j$ that satisfy $\lambda_1(y_i;\Xi)<0$ and $\lambda_1(y_j;\Xi)<0$ then $(y_i,y_i-y_j)$ is a stable equilibrium of the vector field given by $F_1(y;z,\Xi)$ and $F_2(y,d;\Xi)$. This completes the proof of Theorem \ref{main2}.

Finally, the proof of Theorem \ref{main3} is the same as for the case $\Xi=\ekr$.

\subsection{Proofs of Theorems \ref{highlander} and \ref{threshold}}

\begin{proof}[Proof of Theorem \ref{highlander}]
By differentiating $F_1\Bigl(y;x,\ekr\Bigr)$ we show that for every $x$ there are at most two values of $\Psi(x)$ which occur with positive probability, and where there are two such values that they occur in two disjoint intervals which do not depend on $x$. Thus a point of condensation must almost surely involve a jump between these regions. Since $\Psi(x)$ is increasing by definition, this can happen at most once.

We have
\begin{align*}
\frac{\partial}{\partial y}g\Bigl(y;\ekr\Bigr)&=\frac{\partial}{\partial y}\sum_{i=k}^{r}\binom{r}{i}y^{i}(1-y)^{r-i}\\
&=\sum_{i=k}^{r}i\binom{r}{i}y^{i-1}(1-y)^{r-i}-\sum_{i=k}^{r-1}(r-i)\binom{r}{i}y^{i}(1-y)^{r-i-1} \\
&=\sum_{i=k}^{r}r\binom{r-1}{i-1}y^{i-1}(1-y)^{r-i}-\sum_{i=k}^{r-1}r\binom{r-1}{i}y^{i}(1-y)^{r-i-1} \\
&=r\binom{r-1}{k-1}y^{k-1}(1-y)^{r-k},
\end{align*}
because all other terms cancel. So
\[\frac{\partial}{\partial y} F_1\Bigl(y;x,\ekr\Bigr)=r\binom{r-1}{k-1}y^{k-1}(1-y)^{r-k}-(2+\alpha);\]
note that this does not depend on $x$.

If $k=r$ then $\frac{\partial^{2}}{\partial y^2}F_1\Bigl(y;x,\ekr\Bigr)$ is positive on $(0,1)$, and if $k=1$ then it is negative on $(0,1)$. Otherwise,
\begin{align*}
\frac{\partial^2}{\partial y^2} F_1\Bigl(y;x,\ekr\Bigr)&=r\binom{r-1}{k-1}\bigl((k-1)(1-y)-(r-k)y\bigr)y^{k-2}(1-y)^{r-k-1}\\
&=r\binom{r-1}{k-1}y^{k-2}(1-y)^{r-k-1}\bigl((k-1)-(r-1)y\bigr),
\end{align*}
which is positive for $y\in\bigl(0,\frac{k-1}{r-1}\bigr)$ and negative for $y\in\bigl(\frac{k-1}{r-1},1\bigr)$. It follows that, for any choice of $k$, the equation
\begin{equation}r\binom{r-1}{k-1}y^{k-1}(1-y)^{r-k}-(2+\alpha)=0\label{turning}\end{equation}
has at most two roots in $(0,1)$, and that if it has exactly two such roots $z_1<z_2$ then the left-hand side is positive for $y\in(z_1,z_2)$.

Suppose \eqref{turning} has two roots $z_1<z_2$. Then for any $x$ we have that $F_1\Bigl(y;x,\ekr\Bigr)$ is strictly decreasing on the intervals $[0,z_1)$ and $(z_2,1]$, but strictly increasing on $(z_1,z_2)$. Consequently, $F_1\Bigl(y;x,\ekr\Bigr)=0$ has at most one root $y_1(x)\in[0,z_1]$, at most one root $y_2(x)\in(z_1,z_2)$ (which, if it exists, is an unstable zero), and at most one root $y_3(x)\in[z_2,1]$. Further, for each $i$, $y_i(x)$ is continuous on the range of $x$ for which it exists. Note that $F_1\Bigl(0;0,\ekr\Bigr)=F_1\Bigl(1;1,\ekr\Bigr)=0$ and so $y_1(0)=0$ and $y_3(1)=1=\Psi(1)$. By Theorem \ref{main1}, almost surely for almost all $x$ we have $\Psi(x)\in\{y_1(x),y_3(x)\}$. Let $x^*=\inf\{x\in[0,1]:\Psi(x)=y_3(x)\}$; since $\Psi$ is increasing and $y_1(x)$ and $y_3(x)$ are continuous we have $\Psi(x)=y_1(x)$ for all $x\in[0,x^*)$ and $\Psi(x)=y_3(x)$ for all $x\in(x^*,1]$. It follows that $\Psi$ is continuous everywhere except at $x^*$, as required.

If \eqref{turning} has exactly one root, $z$, in $(0,1)$, then the equation $F_1\Bigl(y;x,\ekr\Bigr)=0$ has at most one root $y_1(x)\in[0,z]$ and at most one root $y_2(x)\in(z,1]$ for every $x\in [0,1]$; again we must have $y_1(0)=0$ and $y_2(1)=1=\Psi(1)$. Defining $x^*$ as above, almost surely $\Psi(x)=y_1(x)$ for all $x\in[0,x^*)$ and $\Psi(x)=y_2(x)$ for all $x\in(x^*,1]$, so again $\Psi$ is continuous except possibly at $x^*$. Finally, if \eqref{turning} has no roots in $[0,1]$, then the equation $F_1\Bigl(y;x,\ekr\Bigr)=0$ has exactly one root $y_1(x)\in[0,1]$ for every $x\in[0,1]$ and we must have $y_1(0)=0$ and $y_1(1)=1=\Psi(1)$. Thus we almost surely have $\Psi(x)\equiv y_1(x)$, and there are no points of condensation.
\end{proof}

\begin{proof}[Proof of Theorem \ref{threshold}]
First, suppose $k=r$ (the case $k=1$ is similar). Then 
\[\frac{\partial}{\partial y} F_1\Bigl(y;x,\ekr\Bigr)=ry^{r-1}-(2+\alpha),\]
and $ry^{r-1}-(2+\alpha)=0$ has one root in $(0,1)$ if $\alpha>r-2$ and none otherwise. The proof of Theorem \ref{highlander} shows that almost surely condensation does not occur in the latter case. In the former case, let the root be $z$. Then 
$F_1\Bigl(y;x,\ekr\Bigr)$ is strictly increasing on $(z,1)$ meaning that any root of $F_1\Bigl(y;x,\ekr\Bigr)=0$ in this region is an unstable zero.
Consequently Theorem \ref{main1} implies that almost surely $\Psi(x)\leq z$ for all $x<1$, i.e.\ condensation occurs at $1$.

Secondly, suppose $1<k<r$. Then the left-hand side of \eqref{turning} is $-(2+\alpha)<0$ for $y=0$ and $y=1$, and is strictly increasing on
$y\in\bigl(0,\frac{k-1}{r-1}\bigr)$ and decreasing for $y\in\bigl(\frac{k-1}{r-1},1\bigr)$. Thus if it is negative for $y=\frac{k-1}{r-1}$ then \eqref{turning}
has no roots in $(0,1)$, and, as before, condensation almost surely does not occur. If the left-hand side is positive for $y=\frac{k-1}{r-1}$
then \eqref{turning} has two roots $z_1<z_2$ in $(0,1)$, and any root of $F_1\Bigl(y;x,\ekr\Bigr)=0$ in $(z_1,z_2)$ is an unstable zero, 
meaning that almost surely $\Psi(x)\not\in(z_1,z_2)$ for any $x$, so condensation occurs. If the left-hand side vanishes for $y=\frac{k-1}{r-1}$
then $F_1\Bigl(y;x,\ekr\Bigr)$ is strictly decreasing on $[0,1]$ and so $F_1\Bigl(y;x,\ekr\Bigr)=0$ has a single continuously-varying root $y_1(x)$ satisfying
$y_1(0)=0$ and $y_1(1)=1$; almost surely $\Psi(x)\equiv y_1(x)$ and so there is no condensation.

Since the sign of the left-hand side of \eqref{turning} changes precisely at $\alpha_{\mathrm c}$, this completes the proof.
\end{proof}
\section{Examples}\label{Section 4}
In this section we consider some examples of choices of $\Xi$ which illustrate how the results of Theorems \ref{main1} and \ref{main2} can apply to different cases.

\subsection{Middle of three}\label{midofthreemodd}

As $r=1$ gives standard preferential attachment, and the cases with $r=2$ and either $\Xi=(0,1)$ or $\Xi=(1,0)$ are equivalent to cases of the choice-fitness model of \cite{Sou.1}, which has rather different behaviour, the simplest case which illustrates our results is the ``middle of three'' model given by $r=3$ and $\Xi=(0,1,0)$. Here Theorem \ref{threshold}
predicts a phase transition at $\alpha_{\mathrm c}=-1/2$.

We can express our functions $F_1\Bigl(y;x,\ekr\Bigr)$ and $F_2\Bigl(y,d;\ekr\Bigr)$ as \[F_1\Bigl(y;x,\midthree\Bigr)=-2y^3+3y^2-(2+\alpha)y+x(\alpha+1)\] and \[F_2\Bigl(y,d;\midthree\Bigr)=-2d^{3}+6d^{2}y-3d^{2}-6dy^{2}+6dy-d(2+\alpha).\]

For $x\in(0,1)$, define $\{\psi_1(x),\psi_2(x),\psi_3(x)\}$ such that $\psi_1(x)\leq\psi_2(x)\leq\psi_3(x)$ as the three real roots of $F_1\Bigl(y;x,\midthree\Bigr)=0$ when three exist and $\psi(x)$ as the single root when only one exists. We have $F_1'\Bigl(y;x,\midthree\Bigr)=-6y^2+6y-(2+\alpha)$, and so $F_1\Bigl(y;x,\midthree\Bigr)$ is decreasing in $y$ whenever $\alpha\geq-\frac12$, but has turning points at $y=\frac12\pm\sqrt{-(1+2\alpha)/12}$ for $\alpha\in\bigl(-1,-\frac12\bigr)$. At $x=\frac12$ we have $F_1\Bigl(y;x,\midthree\Bigr)=\bigl(\frac12-y\bigr)\Bigl(F_1'\Bigl(y;x,\midthree\Bigr)+1+2\alpha\Bigr)/3$. Consequently the values of $F_1\Bigl(y;\frac12,\midthree\Bigr)$ at the turning points are $\mp\sqrt{-(1+2\alpha)^3/108}$, and the corresponding values of $F_1\Bigl(y;\frac12,\midthree\Bigr)$ are given by $(1+\alpha)\bigl(x-\frac12\bigr)\mp\sqrt{-(1+2\alpha)^3/108}$. It follows that there are multiple roots if and only if $\bigl| x-\frac12\bigr|\leq\sqrt{\frac{-(1+2\alpha)^3}{108(1+\alpha)}}$; write $s=s(\alpha)$ for this quantity. Note that $s<\frac12$ if and only if $\alpha>-\frac78$. 

\begin{figure}[!htb]
\begin{tikzpicture} 
\begin{axis}[
width=.95\textwidth,
height=6cm,
axis lines = left,
ymin=-0.15,ymax=0.2,
xlabel = $y$,
ylabel = {$F_1\Bigl(y;x,\midthree\Bigr)$},
]
\addplot [
domain=0:1, 
samples=100, 
color=black,
dotted,
thick,
]
{-2*x^3+3*x^2-1.25*x+0.159};
\addlegendentry{$x=\frac{9+\sqrt{6}}{18}$}
\addplot [
domain=0:1, 
samples=100, 
color=black,
dashed,
thick,
]
{-2*x^3+3*x^2-1.25*x+0.091};
\addlegendentry{$x=\frac{9-\sqrt{6}}{18}$}
\addplot [
domain=0:1, 
samples=100, 
color=black,
]
{0};
\end{axis}
\end{tikzpicture}
\caption{$F_1\Bigl(y;x,\midthree\Bigr)$ for $\alpha=-0.75$ and $x=\frac12\pm s$.}\label{Fig_2}
\end{figure}

Figure \ref{Fig_2} plots $F_1\Bigl(y;x,\midthree\Bigr)$ against $y\in[0,1]$ for the value $\alpha=-\frac{3}{4}$ and $x=\frac12\pm s$, and Figure \ref{Fig_4} plots the roots of $F_1\Bigl(y;x,\midthree\Bigr)$ against $x\in[0,1]$. There is exactly one real root when $x\in\Bigl\{\Bigl[0,\frac{9+\sqrt{6}}{18}\Bigr)\cup\Bigl(\frac{9-\sqrt{6}}{18},1\Bigr]\Bigr\}$ and three real roots when $x\in\Bigl[\frac{9-\sqrt{6}}{18},\frac{9+\sqrt{6}}{18}\Bigr]$.

\begin{figure}[!htb]
\centering
\includegraphics[width=\textwidth]{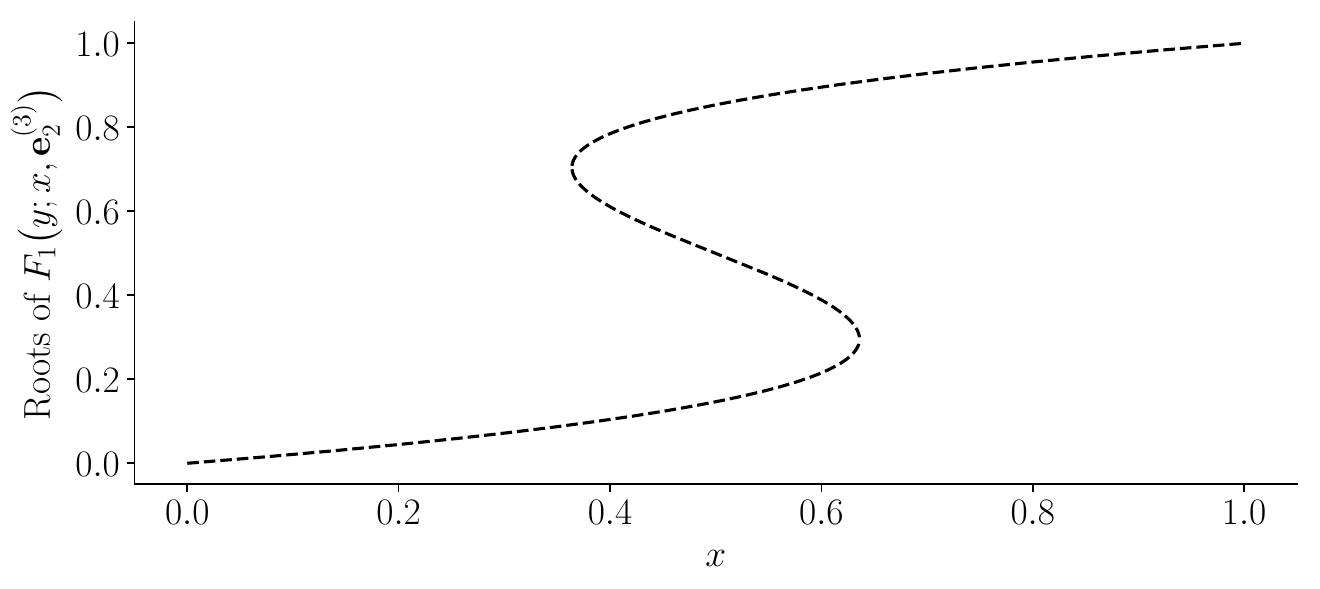}
\caption{The roots of $F_1\Bigl(y;x,\midthree\Bigr)$ for $x\in[0,1]$ and $\alpha=-0.75$.}\label{Fig_4}
\end{figure}

In this setting, Theorem \ref{main1} becomes the following.
\begin{thm}\label{mid of 3 phase trans}
For a fixed location $x\in(0,1)$, the random variable $\Psi_n(x)$ converges pointwise as $n\to\infty$ almost surely to the following limits.
\begin{equation*}
\lim_{n\to\infty}\Psi_n(x)=
\begin{cases}
\psi(x) ,& \text{if } \alpha\geq-\frac{1}{2}\\
\psi(x) ,& \text{if } \alpha\in\bigl(-\frac{7}{8},-\frac{1}{2}\bigr)\text{ and }x\not\in\bigl[\frac{1}{2}-s,\frac{1}{2}+s\bigr]\\
\psi_{1}(x)\text{ or }\psi_{3}(x),& \text{if } \alpha\in\bigl(-\frac{7}{8},-\frac{1}{2}\bigr)\text{ and } x\in\bigl[\frac{1}{2}-s,\frac{1}{2}+s\bigr]\\
\psi_{1}(x)\text{ or }\psi_{3}(x),& \text{if } \alpha\leq-\frac{7}{8}.
\end{cases}
\end{equation*}
\end{thm}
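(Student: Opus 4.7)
The theorem is a direct case analysis applied to Theorem \ref{equstabroot} in the specific setting of the cubic
\[F_1(y;x,\midthree)=-2y^3+3y^2-(2+\alpha)y+x(1+\alpha).\]
By that theorem, $\Psi_n(x)$ converges almost surely to a zero of $F_1(\cdot;x,\midthree)$ in $(0,1)$, unstable zeros are almost surely ruled out, and stable zeros together with any touchpoints each have positive probability of being the limit. So the plan is simply to identify, for each pair $(\alpha,x)$, how many zeros $F_1(\cdot;x,\midthree)$ has in $(0,1)$ and which of them are stable, and then invoke Theorem \ref{equstabroot}.

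To do this I would examine the shape of $F_1$ in $y$ via
\[\frac{\partial F_1(y;x,\midthree)}{\partial y}=-6y^2+6y-(2+\alpha),\]
whose discriminant in $y$ is $-12-24\alpha$. For $\alpha\geq-\tfrac12$ this derivative is non-positive throughout, so $F_1$ is strictly decreasing from $x(1+\alpha)>0$ at $y=0$ to $(1+\alpha)(x-1)<0$ at $y=1$, giving the unique stable zero $\psi(x)$; this handles the first case. For $\alpha\in(-1,-\tfrac12)$ the derivative has roots $y_\pm=\tfrac12\pm\sqrt{-(1+2\alpha)/12}$, and $F_1$ is decreasing on $[0,y_-]$, increasing on $[y_-,y_+]$, and decreasing on $[y_+,1]$. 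Using the identity already recorded in the paper, the critical values satisfy $F_1(y_\pm;x,\midthree)=(1+\alpha)(x-\tfrac12)\mp\sqrt{-(1+2\alpha)^3/108}$, so $F_1(\cdot;x,\midthree)$ has three distinct real zeros in $(0,1)$ precisely when $|x-\tfrac12|<s$, exactly one simple stable zero when $|x-\tfrac12|>s$, and a touchpoint together with a simple stable zero when $|x-\tfrac12|=s$. In the three-root regime the monotonicity pattern forces $\psi_1(x)\in[0,y_-]$ and $\psi_3(x)\in[y_+,1]$ to be stable while $\psi_2(x)\in(y_-,y_+)$ is unstable.

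It then remains only to compare the threshold $s$ with $\tfrac12$: a direct calculation shows $s<\tfrac12$ iff $\alpha>-\tfrac78$, so for $\alpha\in(-\tfrac78,-\tfrac12)$ the three-root regime is confined to the proper subinterval $[\tfrac12-s,\tfrac12+s]\subset(0,1)$ (matching cases two and three of the theorem), while for $\alpha\leq-\tfrac78$ this interval already contains $[0,1]$, so every $x\in(0,1)$ is in the three-root regime (case four). Invoking Theorem \ref{equstabroot} and observing that the unstable middle root $\psi_2(x)$ occurs as the limit with probability zero while each of $\psi_1(x)$ and $\psi_3(x)$ (and the touchpoint, when $|x-\tfrac12|=s$) carries positive probability then yields the stated trichotomy. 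There is no substantive obstacle; the only mild care needed is at the boundary $|x-\tfrac12|=s$, where one uses the touchpoint clause of Theorem \ref{equstabroot} to verify that $\psi_1(x)$ or $\psi_3(x)$ remains a possible limit even when one of them coincides with the touchpoint.
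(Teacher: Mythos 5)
Your proposal is correct and follows essentially the same route as the paper, which likewise derives this theorem by analysing the turning points of the cubic $F_1(y;x,\midthree)$ via $F_1'(y;x,\midthree)=-6y^2+6y-(2+\alpha)$, locating the regime $\lvert x-\tfrac12\rvert\leq s$ of multiple roots, checking $s<\tfrac12$ iff $\alpha>-\tfrac78$, and then invoking the general convergence result (Theorem \ref{equstabroot}/Theorem \ref{main1}) to rule out the unstable middle root. Your classification of $\psi_1,\psi_3$ as stable and $\psi_2$ as unstable, and your handling of the touchpoint case at $\lvert x-\tfrac12\rvert=s$, match the paper's intended argument.
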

We can see there is a phase transition at $\alpha=-\frac{1}{2}$:
when $\alpha\geq-\frac{1}{2}$, $\Psi$ is almost surely continuous, whereas when $\alpha<-\frac{1}{2}$, $\Psi$ follows the lower root $\psi_1(x)$
until a random point in $\bigl[\frac{1}{2}-s,\frac{1}{2}+s\bigr]$ at which it jumps to the upper root $\psi_3(x)$, giving a point of condensation. 

\begin{figure}[!htb]
\begin{minipage}[b]{.5\textwidth}\vspace{0pt}
\centering\includegraphics[width=\textwidth]{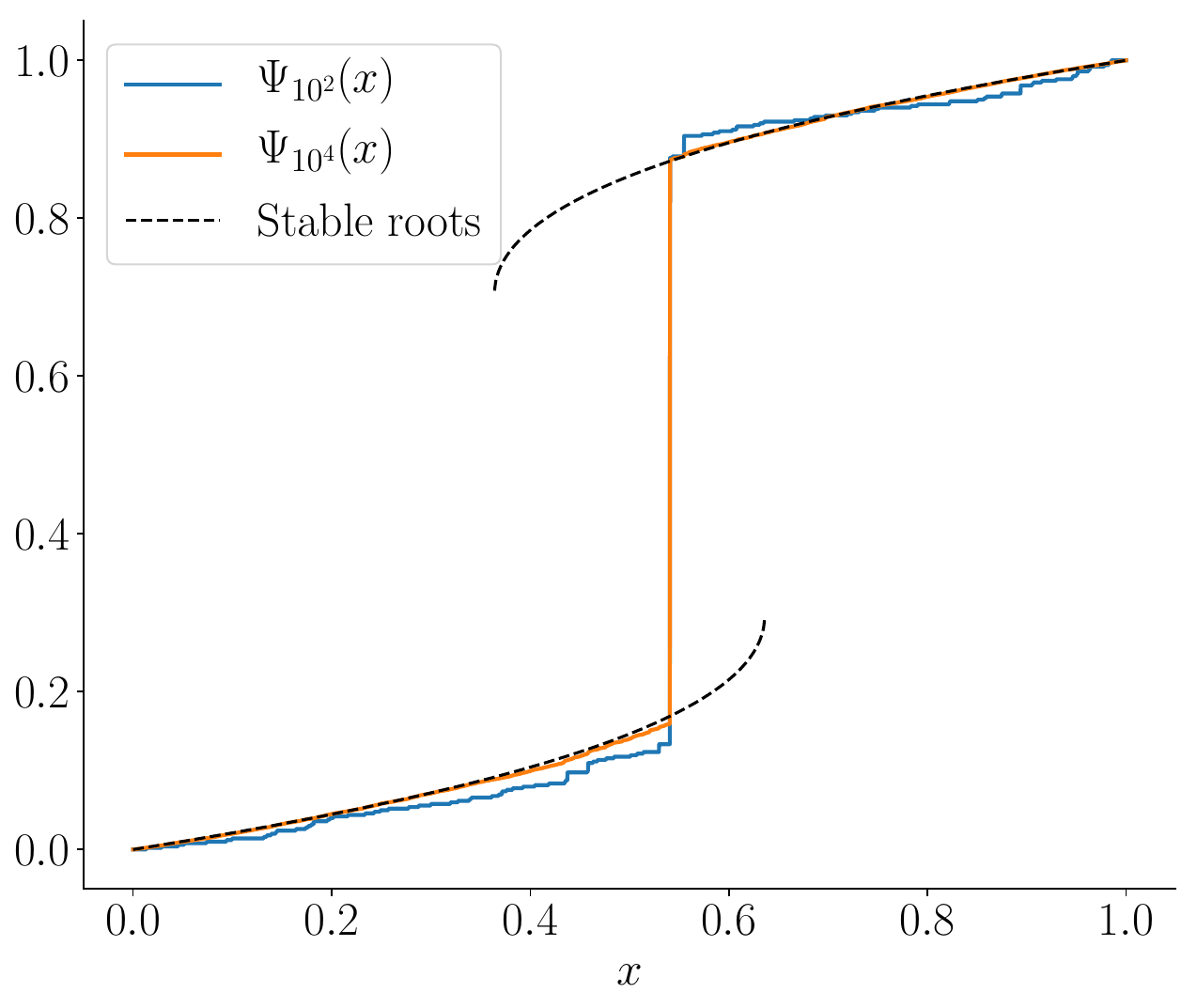}
\end{minipage}%
\begin{minipage}[b]{.5\textwidth}\vspace{0pt}
\centering\includegraphics[width=\textwidth]{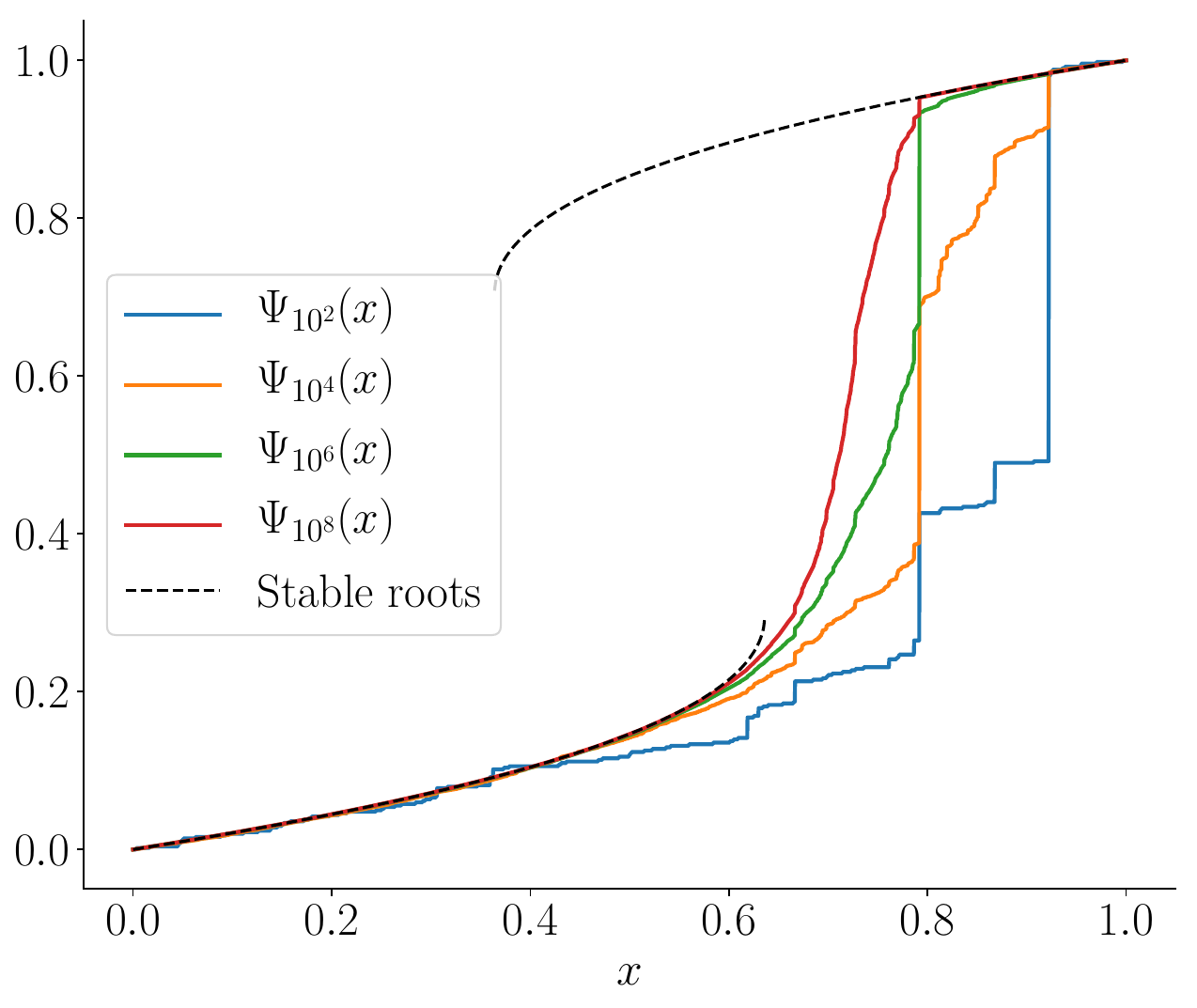}
\end{minipage}
\caption{Results from simulations for $\alpha=-0.75$}\label{mid-sims}
\end{figure}

If $-\frac{7}{8}<\alpha< -\frac{1}{2}$, Theorem \ref{main2} implies that this point of condensation is with positive probability caused by a persistent hub 
occurring at a random location which has full support on $\bigl(\frac{1}{2}-s,\frac{1}{2}+s\bigr)$. However, Theorem \ref{main3} implies that the point of condensation
also has positive probability of occurring at each of the endpoints $\frac{1}{2}-s$ and $\frac{1}{2}+s$; since almost surely these values are not locations 
of any vertex, it follows that there is also a positive probability that there is no persistent hub. Figure \ref{mid-sims} shows the results of two simulations
for $\alpha=-0.75$ with different behaviour: in the first simulation there is rapid convergence of $\Psi_n$ to a limit with condensation occurring via a 
persistent hub, whereas in the second $\Psi_n$ shows much slower convergence, apparently towards condensation at $\frac12+s$.
If $\alpha\leq-\frac{7}{8}$, Theorem \ref{main2} implies that the location of the jump has full support on $(0,1)$.

As we can now implement conditions on $F_1\Bigl(y,x;\ekr\Bigr)$ using $x$ and $\alpha$ to control whether we have one or three real roots we can solve
\begin{equation*}
F_1\Bigl(y,x;\ekr\Bigr)=F_2\Bigl(y,d,x;\ekr\Bigr)=0
\end{equation*}
by assuming $F_1\Bigl(y,x;\ekr\Bigr)=0$ has three real roots $\{\psi_1(x),\psi_2(x),\psi_3(x)\}$. We therefore can solve $F_2\Bigl(y,d,x;\ekr\Bigr)=0$ to get $\delta_1=0$ and $\delta_2$ and $\delta_3$ given by $\frac{3}{4}(2\psi_i(x)-1)\pm\frac{1}{4}\sqrt{-12\psi_i(x)^2+12\psi_i(x)-7-8\alpha}$.

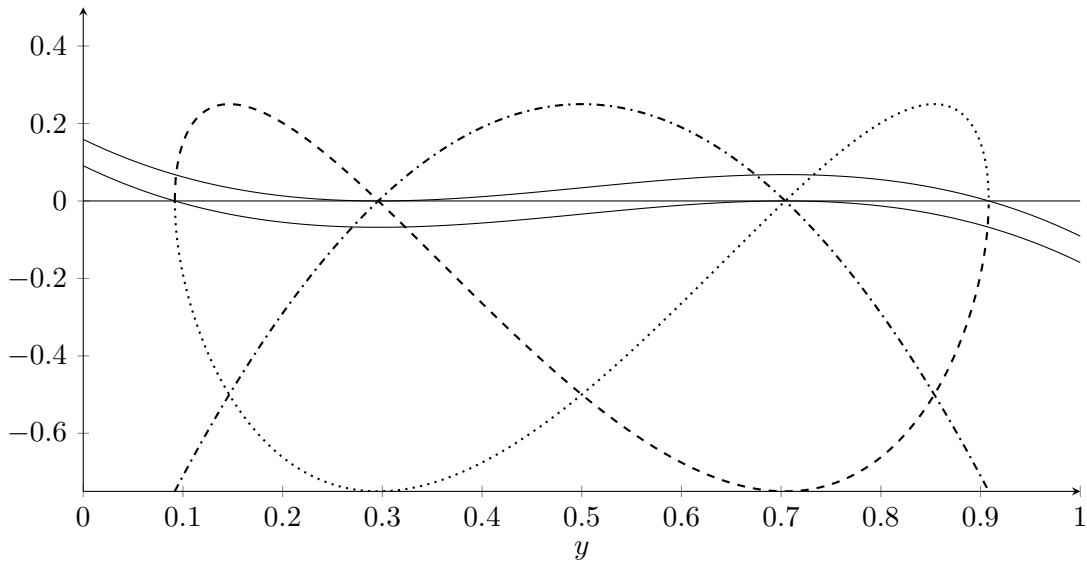
\begin{figure}[!htb]
\begin{tikzpicture}
\begin{axis}[
width=\textwidth,
height=8cm,
axis lines = left,
ymin=-0.75,ymax=0.5,
xlabel = $y$,
xmin=0, xmax=1,
]
\addplot [
domain=0.0917517:0.908248, 
samples=300, 
color=black,
thick,
dash dot,
]
{-6*x^2+6*x-1.25};
\addplot [
domain=0.0917517:0.908248, 
samples=1000, 
color=black,
dashed,
thick,
]
{-0.25*((-12*x^2 + 12*x -7 +6)^0.5)*(6*x-3+(-12*x^2 + 12*x -7 +6)^0.5)};
\addplot [
domain=0.0917517:0.908248, 
samples=1000, 
color=black,
dotted,
thick,
]
{-0.25*((-12*x^2 + 12*x -7 +6)^0.5)*(3-6*x+(-12*x^2 + 12*x -7 +6)^0.5)};
\addplot [
domain=0:1, 
samples=300, 
color=black,
]
{-2*x^3 + 3*x^2 - 1.25*x + 0.25*(0.5-(6^0.5)/18)};
\addplot [
domain=0:1, 
samples=300, 
color=black,
]
{-2*x^3 + 3*x^2 - 1.25*x + 0.25*(0.5+(6^0.5)/18)};
\addplot [
domain=0:1, 
samples=100, 
color=black,
]
{0};
\end{axis}
\end{tikzpicture}
\caption{Plot of eigenvalues of the Jacobian when $\alpha=-0.75$}\label{Fig_5}
\end{figure}

Figure \ref{Fig_5} illustrates Theorem \ref{only one eig needed} in this case, showing the eigenvalues of the Jacobian at the stationary points.
The solid curves show $F_1\Bigl(y;x,\midthree\Bigr)$ for $y\in[0,1]$ at $x=\frac{1}{2}\pm s$, the upper and lower limits of the region of $x$ where there are three real roots. In this same region are plotted $\lambda_1\Bigl(y;\midthree\Bigr)$ (the parabola), $\lambda_2\Bigl(y,\delta_2;\midthree\Bigr)$ and $\lambda_2\Bigl(y,\delta_3;\midthree\Bigr)$ (dashed and dotted lines respectively). The two regions where the eigenvalues are both negative overlap with where the roots of $F_1\Bigl(y;x,\ekr\Bigr)$ would be as $x$ increases from the lower limit to the upper limit.

\subsection{Second or sixth of seven}\label{secondorsixth}
The second example we will discuss makes use of the vector notation introduced in Section \ref{Section 2}. The ``middle of three'' model of Section \ref{midofthreemodd} is an example of selecting the $k$th highest location from $r$ selections, and demonstrates a phase transition below which condensation must occur at a single point. By Theorem \ref{highlander}, no such model can have condensation occurring simultaneously at more than one point. We now consider whether models in which more than one rank has positive probability of being selected can demonstrate multiple points of condensation. If there are three (or more) stable roots of $F_1(y;x,\Xi)$ for some range of $x$ then by Theorem \ref{main2} there is a positive probability of a jump from the first to the second in that range, and in this case since there are still higher stable roots, another jump must occur. If there are two disjoint ranges with two or more stable roots, separated by a range in which there is only one, then at least one jump must occur in each of these ranges. In this section we give an example which (for different values of $\alpha$) demonstrates that both of these can occur, even for models where only two ranks have positive probability. A real-life example of when two points of condensation might be expected is that of a bipartisan election, where two candidates from different regions of the location parameter (which might represent political position) may both attract a given proportion of the votes. 

The distribution we shall use is $\Xi=\bigl(0,\frac{1}{2},0,0,0,\frac{1}{2},0\bigr)$, that is, each new vertex is equally likely to connect to the second or sixth rank of seven candidates; this was the simplest example we could find which allowed for two points of condensation. With the motivating example above, it would be reasonable to suppose that vertices whose own location is higher are more likely to choose the sixth rank of their candidate pool; recall that this type of dependence is permitted in the model.

In this setting, \eqref{prob select vert leq x} gives the following expression for $F_1(y;x,\Xi)$:
\begin{align*}
F_1(y;x,\Xi)&=\frac{1}{2}F_1\Bigl(y;x,\basisvector 27\Bigr)+\frac{1}{2}F_1\Bigl(y;x,\basisvector 67\Bigr)\\
&=\frac{1}{2}\Biggl(\sum^{7}_{i=2}\binom{7}{i}y^{i}(1-y)^{7-i}\Biggr)+\frac{1}{2}\Biggl(\sum^{7}_{i=6}\binom{7}{i}y^{i}(1-y)^{7-i}\Biggr)\\
&\phantom{=}-(2+\alpha)y+x(\alpha+1)\\
&=-6y^7+21y^6-42y^5+\frac{105}{2}y^4-35y^3+\frac{21}{2}y^2-(2+\alpha)y +x(\alpha+1).
\end{align*}

The middle of three model features two phase transitions, at $\alpha=-\frac{1}{2}$ and $\alpha=-\frac{7}{8}$. To discuss phase transitions in this model, we note that the derivative (with respect to $y$) $F_1'(y;x,\Xi)$ does not depend on $x$ and is decreasing in $\alpha$ for fixed $y$; we also note that the symmetry in the system means that $F_1'(y;x,\Xi)=F_1'(1-y;x,\Xi)$. We can thus define
\[\alpha_1=\inf\{\alpha: F_1'(y;x,\Xi)\leq 0\; \forall\; y\in(0,1)\};\]
for $\alpha\geq \alpha_1$, $F_1(y;x,\Xi)$ is a decreasing function of $y$ and so for all $x\in(0,1)$ there is a unique root of $F_1(y;x,\Xi)=0$
in $(0,1)$, whereas for $\alpha<\alpha_1$ there is at least one interval of values of $x$ which have at least three roots of $F_1(y;x,\Xi)=0$ in
$(0,1)$. Hence our results show that condensation occurs almost surely if and only if $\alpha<\alpha_1$. We can calculate $\alpha_1$ explicitly,
since
\[F_1''(y;x,\Xi)=-\tfrac72(2y-1)\Bigl(6y^2-6y+4-\sqrt{10}\Bigr)\Bigl(6y^2-6y+4+\sqrt{10}\Bigr)\]
does not depend on $\alpha$. It is easy to verify that $F_1'(y;x,\Xi)$ is maximised at $y=\frac12\pm\frac16\sqrt{6\sqrt{10}-15}$,
and the maximum value is positive if and only if $\alpha<\frac{35\sqrt{10}-116}{9}$.

For $\alpha\in(-1,\alpha_1)$ $F_1(y;x,\Xi)$ has, in $(0,1)$, two local minima at $\eta_1(\alpha)$ and $\eta_3(\alpha)$ and two local maxima at $\eta_2(\alpha)$ and $\eta_4(\alpha)$, where $\eta_1(\alpha)<\eta_2(\alpha)<\eta_3(\alpha)<\eta_4(\alpha)$; these values depend on $\alpha$ but not on $x$. Set $\alpha_2=\sup\{\alpha: F_1(\eta_2(\alpha);x,\Xi)\geq F_1(0;x,\Xi)\}$; then the set of values of $x$ which have at least three roots includes $0$ and $1$ if and only if $\alpha\leq\alpha_2$. Next, set $\alpha_3=\sup\{\alpha: F_1(\eta_4(\alpha);x,\Xi)\geq F_1(\eta_1(\alpha);x,\Xi)\}$; then for $\alpha<\alpha_3$ there is a range of values of $x$ such that there are five roots of $F_1(y;x,\Xi)=0$ in $(0,1)$, whereas for $\alpha>\alpha_3$ there are always at most three, and there are two disjoint intervals of $x$ where there are three. Hence for $\alpha\in (\alpha_3,\alpha_1)$ there will almost surely be two points of condensation, whereas for $\alpha<\alpha_3$ there will be positive probability of there being a single point of condensation.

Finally, set $\alpha_4=\sup\{\alpha: F_1(\eta_4(\alpha);x,\Xi)\geq F_1(0;x,\Xi)\}$; then there are five roots of $F_1(y;x,\Xi)=0$ in $(0,1)$ for all $x\in (0,1)$ if 
and only if $\alpha\leq \alpha_4$, and hence for this range of $\alpha$ a single point of condensation can occur at a location which is fully supported on $(0,1)$.

\begin{figure}[!htb]
\begin{subfigure}{.45\linewidth}
\begin{tikzpicture}[scale=1]
\begin{axis}[
width=\textwidth,
height=6cm,
axis lines = left,
ymin=-0.6,ymax=0.7,
xlabel = $y$,
xmin=0, xmax=1,
ymin=-0.15, ymax=0.15,
]
\addplot [
domain=0:1,
samples=300,
color=black,
dotted,
]
{-6*x^7+21*x^6-42*x^5+52.5*x^4-35*x^3+10.5*x^2-(2-0.59114)*x+0.5*(-0.59114+1)};
\addplot [
domain=0:1,
samples=300,
color=black,
dashed,
]
{-6*x^7+21*x^6-42*x^5+52.5*x^4-35*x^3+10.5*x^2-(2-0.59114)*x+1*(-0.59114+1)};
\addplot [
domain=0:1,
samples=300,
color=black,
]
{-6*x^7+21*x^6-42*x^5+52.5*x^4-35*x^3+10.5*x^2-(2-0.59114)*x+0*(-0.59114+1)};
\addplot [
domain=0:1,
samples=100,
color=black,
]
{0};
\end{axis}

\end{tikzpicture}
\caption{$\alpha=\alpha_1$} \label{fig:topleftgrid4}
\end{subfigure}%
\begin{subfigure}{.45\linewidth}
\centering
\begin{tikzpicture}[scale=1]
\begin{axis}[
width=\textwidth,
height=6cm,
axis lines = left,
ymin=-0.6,ymax=0.7,
xlabel = $y$,
xmin=0, xmax=1,
ymin=-0.15, ymax=0.15,
]
\addplot [
domain=0:1,
samples=300,
color=black,
dotted,
]
{-6*x^7+21*x^6-42*x^5+52.5*x^4-35*x^3+10.5*x^2-(2-0.87562)*x+0.5*(-0.87562+1)};
\addplot [
domain=0:1,
samples=300,
color=black,
dashed,
]
{-6*x^7+21*x^6-42*x^5+52.5*x^4-35*x^3+10.5*x^2-(2-0.87562)*x+1*(-0.87562+1)};
\addplot [
domain=0:1,
samples=300,
color=black,
]
{-6*x^7+21*x^6-42*x^5+52.5*x^4-35*x^3+10.5*x^2-(2-0.87562)*x+0*(-0.87562+1)};
\addplot [
domain=0:1,
samples=100,
color=black,
]
{0};
\end{axis}

\end{tikzpicture}
\caption{$\alpha=\alpha_2$} \label{fig:toprightgrid4}
\end{subfigure}
\\
\begin{subfigure}{.45\linewidth}
\centering
\begin{tikzpicture}[scale=1]

\begin{axis}[
width=\textwidth,
height=6cm,
axis lines = left,
ymin=-0.6,ymax=0.7,
xlabel = $y$,
xmin=0, xmax=1,
ymin=-0.15, ymax=0.15,
]
\addplot [
domain=0:1,
samples=300,
color=black,
dotted,
]
{-6*x^7+21*x^6-42*x^5+52.5*x^4-35*x^3+10.5*x^2-(2-0.93144)*x+0.5*(-0.93144+1)};
\addplot [
domain=0:1,
samples=300,
color=black,
dashed,
]
{-6*x^7+21*x^6-42*x^5+52.5*x^4-35*x^3+10.5*x^2-(2-0.93144)*x+1*(-0.93144+1)};
\addplot [
domain=0:1,
samples=300,
color=black,
]
{-6*x^7+21*x^6-42*x^5+52.5*x^4-35*x^3+10.5*x^2-(2-0.93144)*x+0*(-0.93144+1)};
\addplot [
domain=0:1,
samples=100,
color=black,
]
{0};
\end{axis}
\end{tikzpicture}
\caption{$\alpha=\alpha_3$} \label{fig:bottomleftgrid4}
\end{subfigure}%
\begin{subfigure}{.45\linewidth}
\centering
\begin{tikzpicture}[scale=1]
\begin{axis}[
width=\textwidth,
height=6cm,
axis lines = left,
ymin=-0.6,ymax=0.7,
xlabel = $y$,
xmin=0, xmax=1,
ymin=-0.15, ymax=0.15,
]
\addplot [
domain=0:1,
samples=300,
color=black,
dotted,
]
{-6*x^7+21*x^6-42*x^5+52.5*x^4-35*x^3+10.5*x^2-(2-0.96842)*x+0.5*(-0.96842+1)};
\addplot [
domain=0:1,
samples=300,
color=black,
dashed,
]
{-6*x^7+21*x^6-42*x^5+52.5*x^4-35*x^3+10.5*x^2-(2-0.96842)*x+1*(-0.96842+1)};
\addplot [
domain=0:1,
samples=300,
color=black,
]
{-6*x^7+21*x^6-42*x^5+52.5*x^4-35*x^3+10.5*x^2-(2-0.96842)*x+0*(-0.96842+1)};
\addplot [
domain=0:1,
samples=100,
color=black,
]
{0};
\end{axis}
\end{tikzpicture}
\caption{$\alpha=\alpha_4$} \label{fig:bottomrightgrid4}
\end{subfigure}
\caption{$F_1(y;x,\Xi)$ evaluated at four different values of $\alpha$ corresponding to the phase transitions that appear for this choice of $\Xi$, and at three different values of $x$: from top to bottom, $x=1$, $x=\frac{1}{2}$ and $x=0$.}\label{Fig_9}
\end{figure}
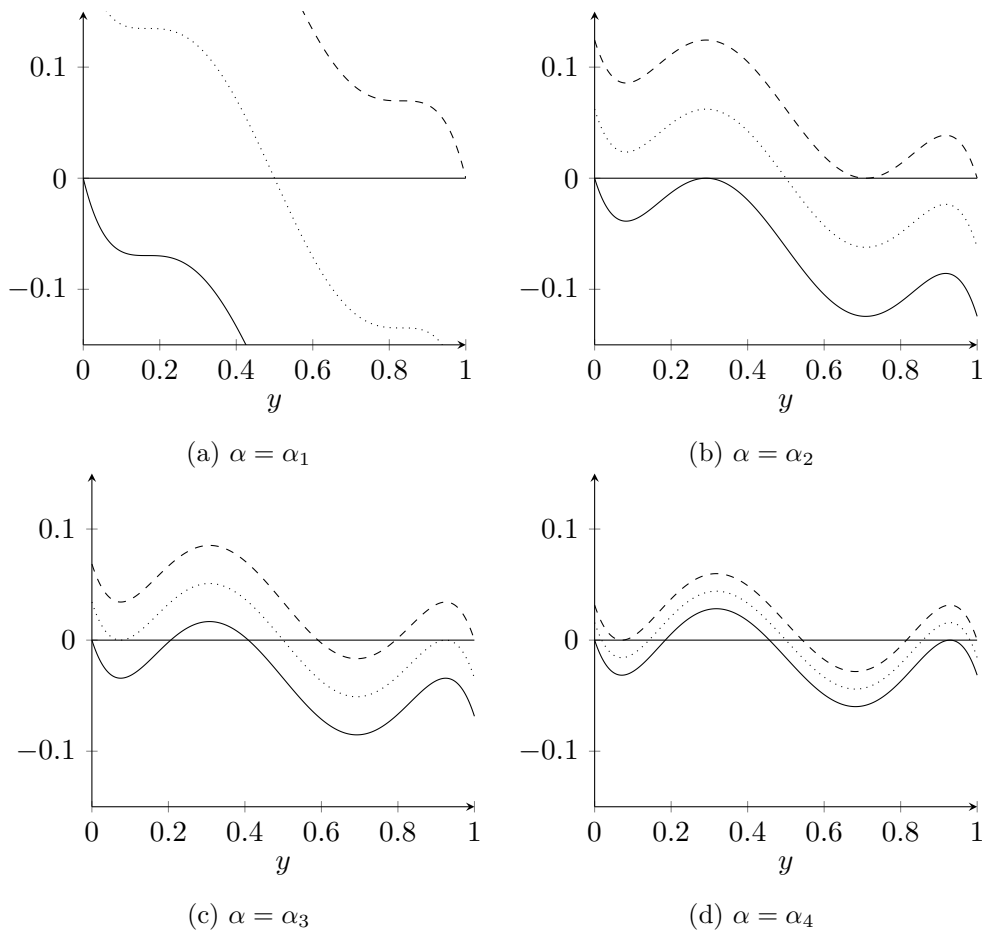 

The four transition points satisfy $\alpha_{1}=\frac{35\sqrt{10}-116}{9}\approx-0.59114$,
$\alpha_{2}\approx-0.87562$, $\alpha_{3}\approx-0.93144$ and $\alpha_{4}\approx-0.96842$. Plots of $F_1(y;x,\Xi)$ for each of the transition points $\alpha_1$, $\alpha_2$, $\alpha_3$ and $\alpha_4$ are shown in Figure \ref{Fig_9}, and plots showing the roots of $F_1(y;x,\Xi)=0$ for two specific values of $\alpha$ ($\alpha=-0.85\in (\alpha_2,\alpha_1)$ and $\alpha=-0.95\in (\alpha_4,\alpha_3)$) appear in Figure \ref{sec-six-roots}.

\begin{figure}[!htb]
\begin{minipage}[b]{.5\textwidth}\vspace{0pt}
\centering\includegraphics[width=\textwidth]{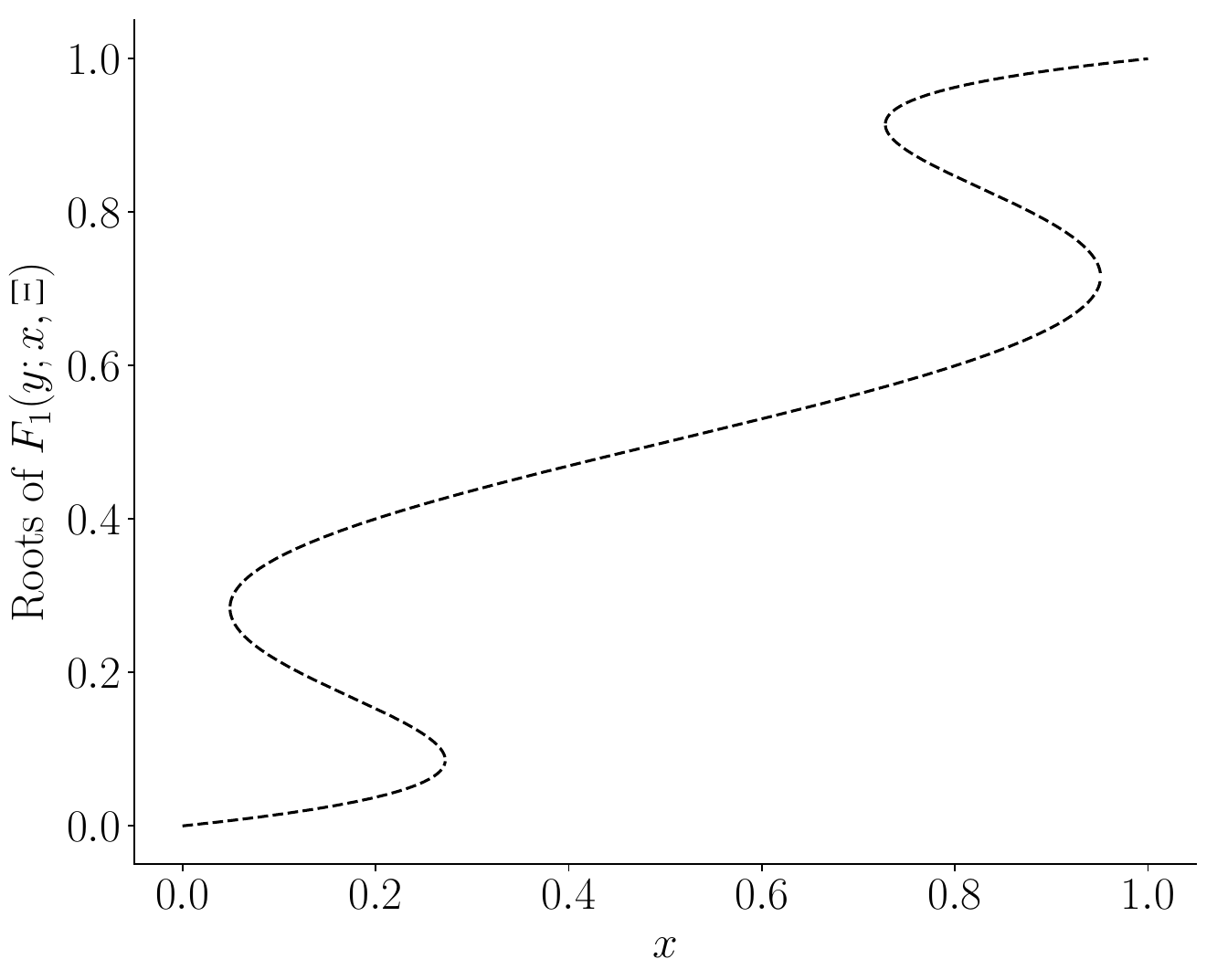}
\end{minipage}%
\begin{minipage}[b]{.5\textwidth}\vspace{0pt}
\centering\includegraphics[width=\textwidth]{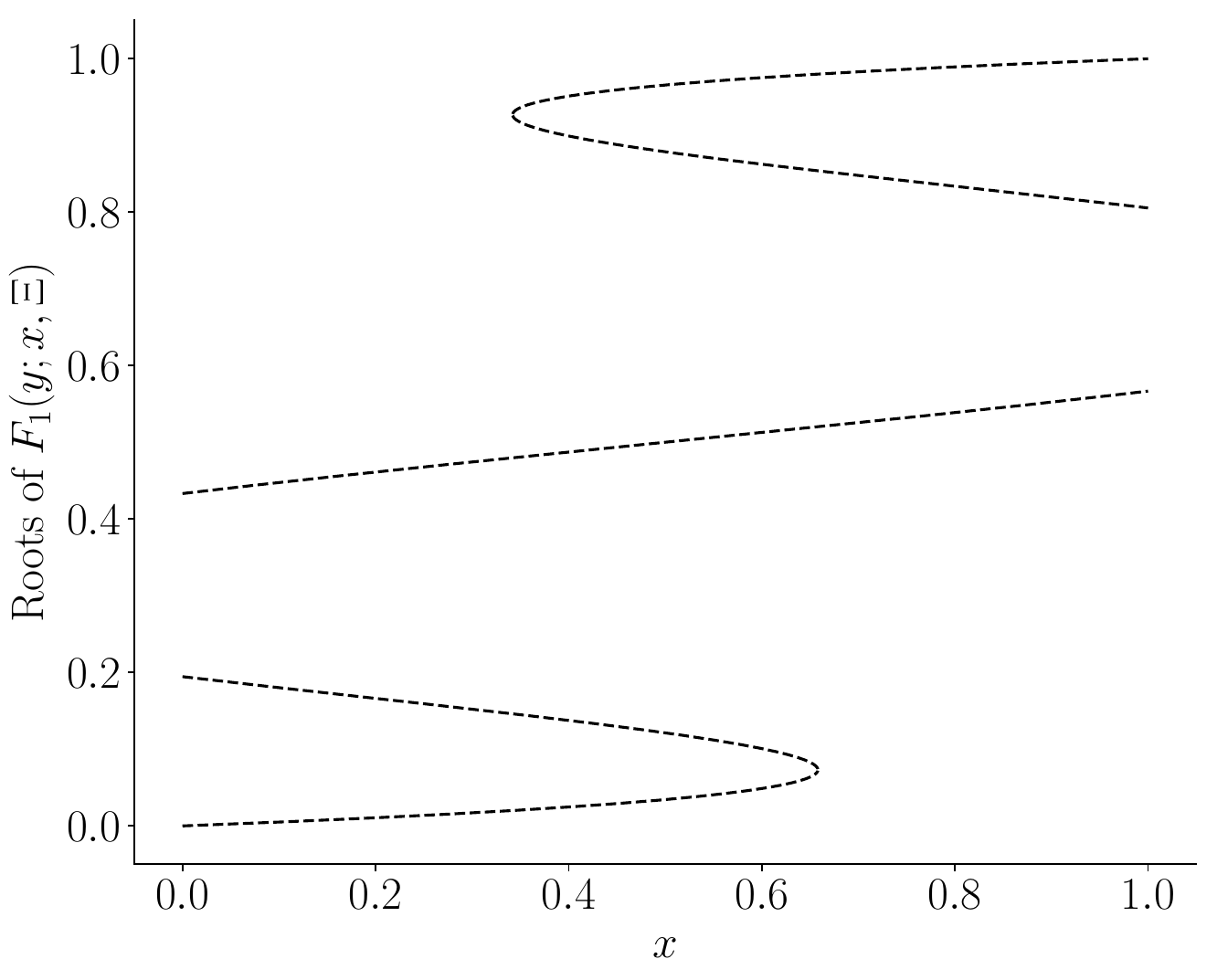}
\end{minipage}
\caption{The roots of $F_1(y;x,\Xi)$ for $\alpha=-0.85$ (left) and $\alpha=-0.95$ (right).}\label{sec-six-roots}
\end{figure}

For $\alpha=-0.85$ the stable roots and touchpoints are given by three continuous partial functions of $x$: $\psi_1$ defined on $(0,\beta_2]$,
$\psi_3$ defined on $[\beta_1,1-\beta_1]$, and $\psi_5$ defined on $[1-\beta_2,1)$, where $0<\beta_1<\beta_2<1/2$. (In fact we have $\beta_1\approx 0.0492$ and $\beta_2\approx 0.2721$.) Each function gives a stable root on the interior of its domain and a touchpoint on the boundary. Consequently, by Theorem \ref{main1}, almost surely $\Psi(x)$ takes the value $\psi_1(x)$ on some interval containing $(0,\beta_1)$, the value $\psi_3(x)$ on some interval containing $(\beta_2,1-\beta_2)$, and the value $\psi_5(x)$ on some interval containing $(1-\beta_1,1)$. Thus there are almost surely two points of condensation. By Theorem \ref{main2}, each of these points of condensation is caused by a persistent hub with positive probability. However, Theorem \ref{main3} implies there is also a positive probability of condensation occurring without a persistent hub at $\beta_1$, $\beta_2$, $1-\beta_2$ or $1-\beta_1$. Figure \ref{85sims} shows the results of two simulations: in the first simulation, both points of condensation arise from persistent local hubs, but in the second the upper part of $\Psi_n$ shows much slower convergence, apparently towards condensation at $1-\beta_2$. Our results do not give any bounds on the relative probabilities of these types of behaviour; however, simulations suggest that it is relatively uncommon to have early hubs forming in both the feasible regions $(\beta_1,\beta_2)$ and $(1-\beta_2,1-\beta_1)$.

\begin{figure}[!htb]
\begin{minipage}[b]{.5\textwidth}\vspace{0pt}
\centering\includegraphics[width=\textwidth]{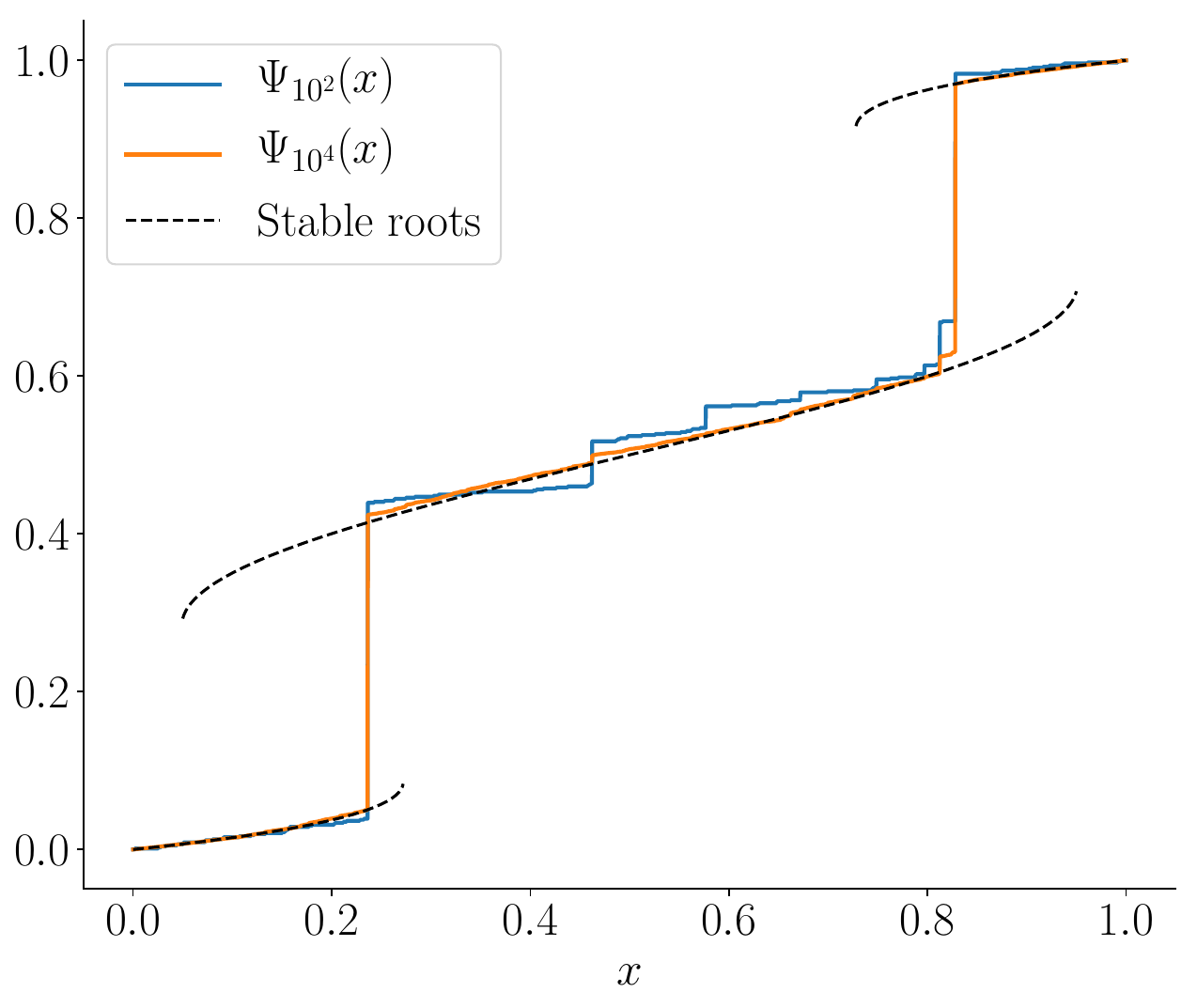}
\end{minipage}%
\begin{minipage}[b]{.5\textwidth}\vspace{0pt}
\centering\includegraphics[width=\textwidth]{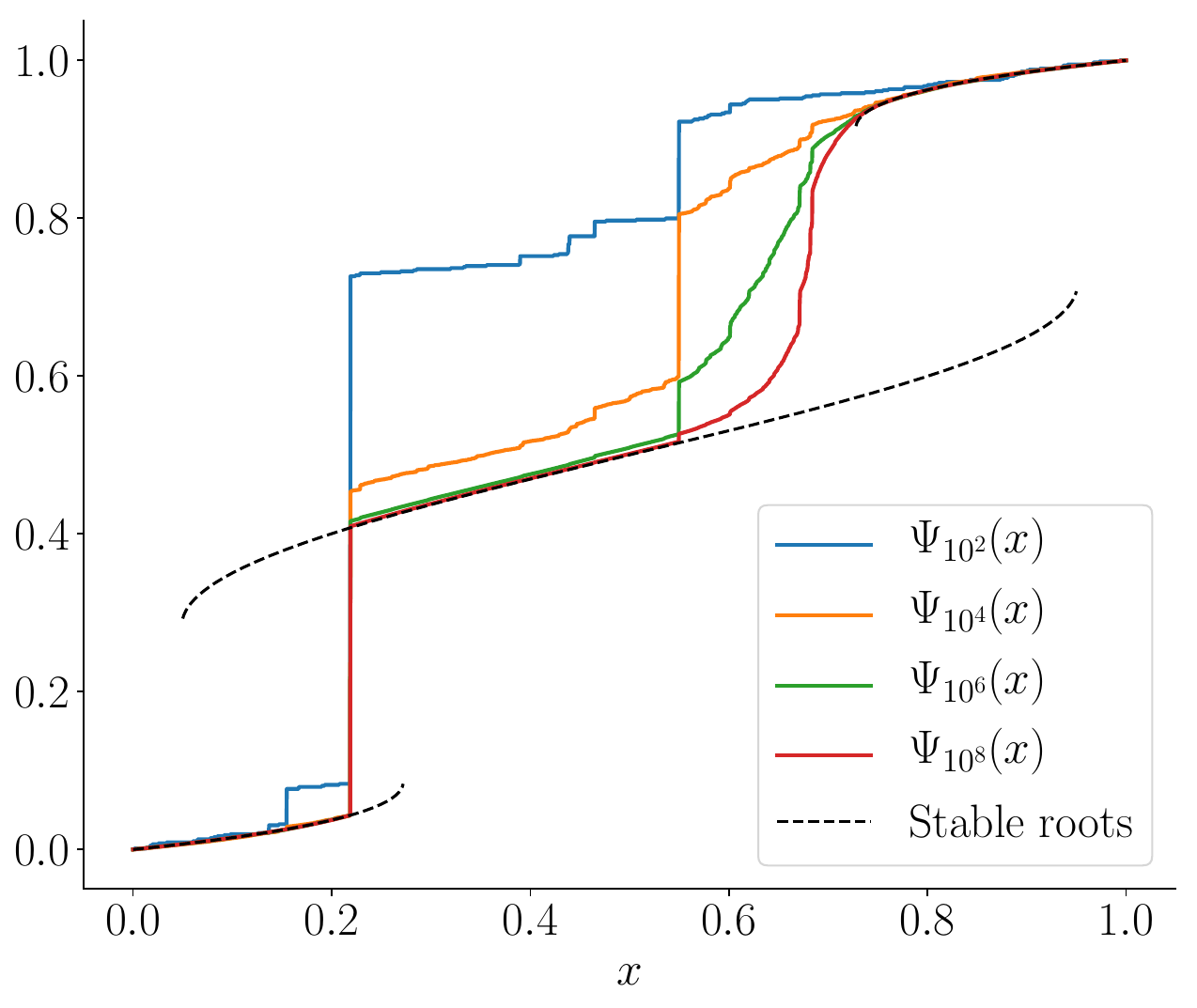}
\end{minipage}
\caption{Results from simulations for $\alpha=-0.85$}\label{85sims}
\end{figure}

For $\alpha=-0.95$ the corresponding partial functions of $x$ are $\psi_1$ defined on $(0,1-\beta]$,
$\psi_3$ defined on $(0,1)$, and $\psi_5$ defined on $[1-\beta,1)$, where $\beta\approx0.3420$. In this case Theorem \ref{main1} implies that there are nontrivial regions on which $\Psi(x)$ takes the values $\psi_1(x)$ and $\psi_5(x)$, but there need not be any $x$ for which $\Psi(x)=\psi_3(x)$, since that is never the only stable root. Consequently there may be two points of condensation if such an $x$ exists, and one point of condensation corresponding to a jump from $\psi_1$ to $\psi_5$ otherwise. Theorem \ref{main2} implies that each of these types of behaviour has positive probability, and Figure \ref{95sims} shows the results of two simulations exhibiting the two types of behaviour. As before, Theorem \ref{main3} implies that there is a positive probability of non-persistent condensation occuring at $\beta$ or $1-\beta$.

\begin{figure}[!htb]
\begin{minipage}[b]{.5\textwidth}\vspace{0pt}
\centering\includegraphics[width=\textwidth]{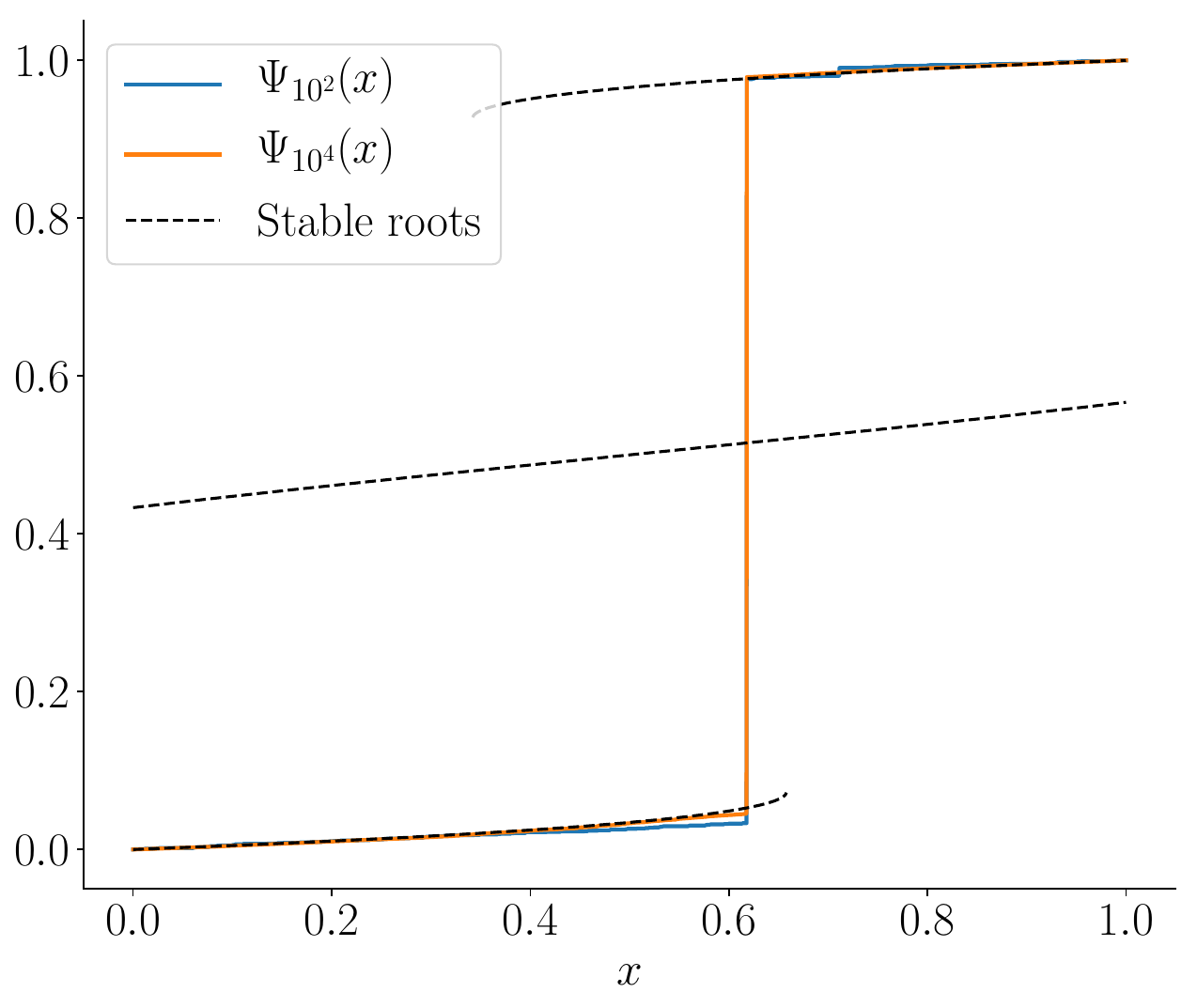}
\end{minipage}%
\begin{minipage}[b]{.5\textwidth}\vspace{0pt}
\centering\includegraphics[width=\textwidth]{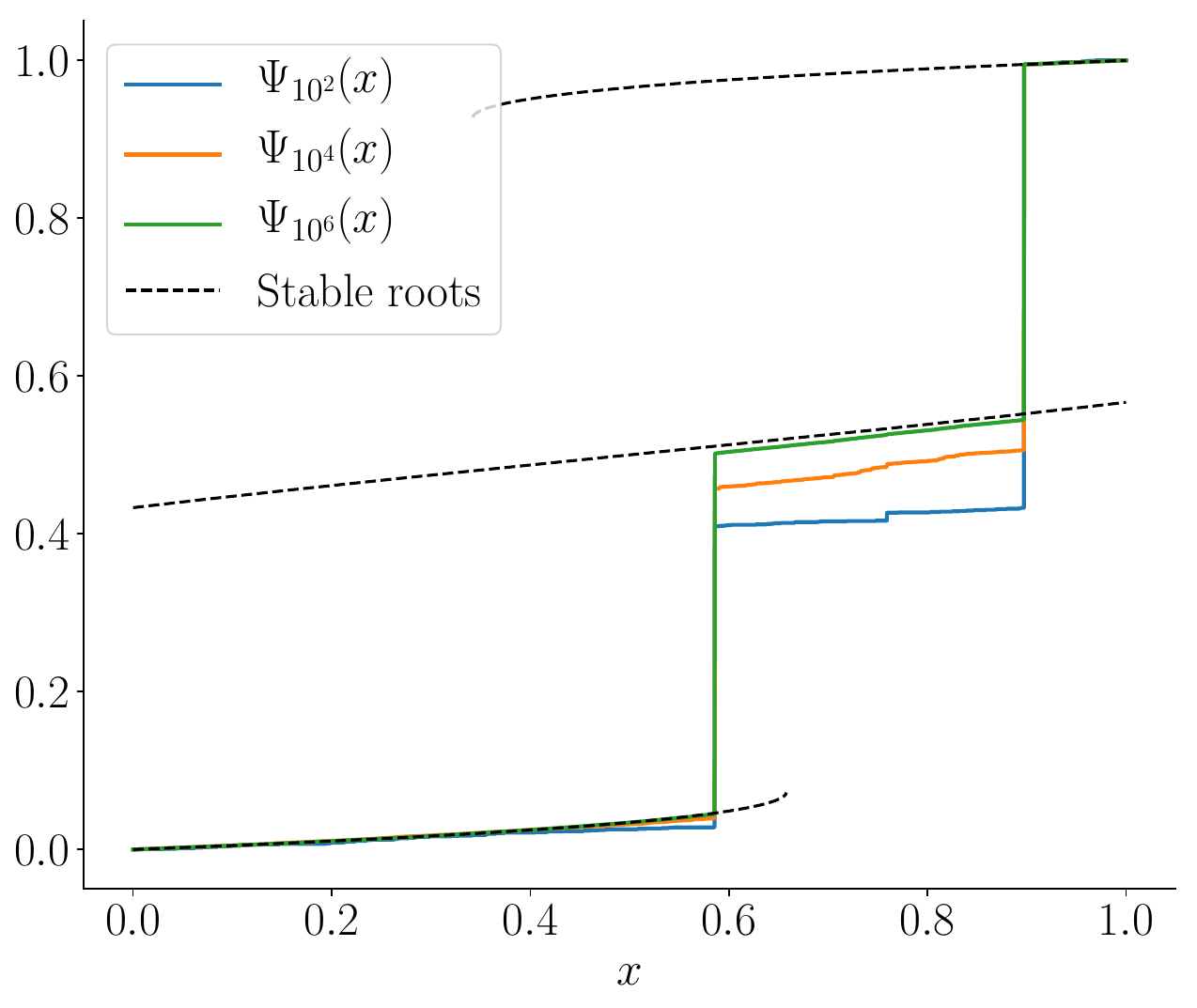}
\end{minipage}
\caption{Results from simulations for $\alpha=-0.95$}\label{95sims}
\end{figure}

\section{Acknowledgments}
The third author acknowledges support from the University of Sheffield for funding their research in the form of a graduate teaching assistant studentship, and would also like to thank Dr Nic Freeman for all of his feedback during the writing of this paper.

The first author acknowledges support from the European Research Council (ERC) under the European Union's Horizon 2020 research and innovation programme (grant agreement No 639046).

\bibliographystyle{abbrv}
\bibliography{references}
\end{document}